\newtheorem{proposition}{Proposition}
\newtheorem{conjecture}{Conjecture}
\theoremstyle{definition}
\begin{document}

\title{Homoclinic Orbits of the FitzHugh-Nagumo Equation: The Singular-Limit}
\date{}
\author{John Guckenheimer and Christian Kuehn}

\maketitle

\begin{abstract}
The FitzHugh-Nagumo equation has been investigated with a wide array of different methods in the last three decades. Recently a version of the equations with an applied current was analyzed by Champneys, Kirk, Knobloch, Oldeman and Sneyd \cite{Sneydetal} using numerical continuation methods. They obtained a complicated bifurcation diagram in parameter space featuring a C-shaped curve of homoclinic bifurcations and a U-shaped curve of Hopf bifurcations. We use techniques from multiple time-scale dynamics to understand the structures of this bifurcation diagram based on geometric singular perturbation analysis of the FitzHugh-Nagumo equation. Numerical and analytical techniques show that if the ratio of the time-scales in the FitzHugh-Nagumo equation tends to zero, then our singular limit analysis correctly represents the observed CU-structure. Geometric insight from the analysis can even be used to compute bifurcation curves which are inaccessible via continuation methods. The results of our analysis are summarized in a singular bifurcation diagram.   
\end{abstract}

\section{Introduction}
\subsection{Fast-Slow Systems}

Fast-slow systems of ordinary differential equations (ODEs) have the general form:
\begin{eqnarray}
\label{eq:fssgen}
\epsilon\dot{x}&=&\epsilon \frac{dx}{d\tau}=f(x,y,\epsilon)\\
\dot{y}&=&\frac{dy}{d\tau}=g(x,y,\epsilon)\nonumber
\end{eqnarray}
where $x\in\mathbb{R}^m$, $y\in\mathbb{R}^n$ and $0\leq \epsilon\ll 1$ represents the ratio of time scales. The functions $f$ and $g$ are assumed to be sufficiently smooth. In the singular limit $\epsilon\rightarrow 0$ the vector field (\ref{eq:fssgen}) becomes a differential-algebraic equation. The algebraic constraint $f=0$ defines the critical manifold $C_0=\{(x,y)\in\mathbb{R}^m\times\mathbb{R}^n:f(x,y,0)=0\}$. Where $D_xf(p)$ is nonsingular, the implicit function theorem implies that there exists a map $h(x)=y$ parametrizing $C_0$ as a graph. This yields the implicitly defined vector field $\dot{y}=g(h(y),y,0)$ on $C_0$ called the slow flow.\\

We can change (\ref{eq:fssgen}) to the fast time scale $t=\tau/\epsilon$ and let $\epsilon\rightarrow 0$ to obtain the second possible singular limit system
\begin{eqnarray}
\label{eq:fssfss}
x'&=&\frac{dx}{dt}=f(x,y,0)\\
y'&=&\frac{dy}{dt}=0\nonumber
\end{eqnarray}
We call the vector field (\ref{eq:fssfss}) parametrized by the slow variables $y$ the fast subsystem or the layer equations. The central idea of singular perturbation analysis is to use information about the fast subsystem and the slow flow to understand the full system (\ref{eq:fssgen}). One of the main tools is Fenichel's Theorem (see \cite{Fenichel1,Fenichel2,Fenichel3,Fenichel4}). It states that for every $\epsilon$ sufficiently small and $C_0$ normally hyperbolic there exists a family of invariant manifolds $C_\epsilon$ for the flow (\ref{eq:fssgen}). The manifolds are at a distance $O(\epsilon)$ from $C_0$ and the flows on them converge to the slow flow on $C_0$ as $\epsilon\rightarrow 0$. Points $p\in C_0$ where $D_xf(p)$ is singular are referred to as fold points\footnote{The projection of $C_0$ onto the $x$ coordinates may have more degenerate singularities than fold singularities at some of these points.}.\\

Beyond Fenichel's Theorem many other techniques have been developed. More detailed introductions and results can be found in \cite{ArnoldEncy,Jones,GuckenheimerNDC} from a geometric viewpoint. Asymptotic methods are developed in \cite{MisRoz,Grasman} whereas ideas from nonstandard analysis are introduced in \cite{DienerDiener}. While the theory is well developed for two-dimensional fast-slow systems, higher-dimensional fast-slow systems are an active area of current research. In the following we shall focus on the FitzHugh-Nagumo equation viewed as a three-dimensional fast-slow system.

\subsection{The FitzHugh-Nagumo Equation}
\label{sec:fhn}

The FitzHugh-Nagumo equation is a simplification of the Hodgin-Huxley model for an electric potential of a nerve axon \cite{HodginHuxley}. The first version was developed by FitzHugh \cite{FitzHugh} and is a two-dimensional system of ODEs:
\begin{eqnarray}
\label{eq:fh}
\epsilon \dot{u}&=&v-\frac{u^3}{3}+u+p\\
\dot{v}&=&-\frac1s(v+\gamma u-a)\nonumber
\end{eqnarray}
A detailed summary of the bifurcations of (\ref{eq:fh}) can be found in \cite{GGR}. Nagumo et al. \cite{Nagumo} studied a related equation that adds a diffusion term for the conduction process of action potentials along nerves:
\begin{equation}
\label{eq:fhn_original}
\left\{
\begin{array}{l}
u_\tau=\delta u_{xx}+f_a(u)-w+p \\
w_\tau=\epsilon(u-\gamma w)
\end{array}
\right.
\end{equation}
where $f_a(u)=u(u-a)(1-u)$ and $p,\gamma,\delta$ and $a$ are parameters. A good introduction to the derivation and problems associated with (\ref{eq:fhn_original}) can be found in \cite{Hastings}. Suppose we assume a traveling wave solution to (\ref{eq:fhn_original}) and set $u(x,\tau)=u(x+s\tau)=u(t)$ and $w(x,\tau)=w(x+s\tau)=w(t)$, where $s$ represents the wave speed. By the chain rule we get $u_\tau=su'$, $u_{xx}=u''$ and $w_\tau=sw'$. Set $v=u'$ and substitute into (\ref{eq:fhn_original}) to obtain the system:
\begin{eqnarray}
\label{eq:fhn_temp}
u'&=&v \nonumber\\
v'&=&\frac1\delta(sv-f_a(u)+w-p)\\
w'&=&\frac{\epsilon}{s}(u-\gamma w)\nonumber
\end{eqnarray}
System~\eqref{eq:fhn_temp} is the FitzHugh-Nagumo equation studied in this paper. Observe that a homoclinic orbit of (\ref{eq:fhn_temp}) corresponds to a traveling pulse solution of (\ref{eq:fhn_original}). These solutions are of special importance in neuroscience \cite{Hastings} and have been analyzed using several different methods. For example, it has been proved that (\ref{eq:fhn_temp}) admits homoclinic orbits \cite{Hastings1,Carpenter} for small wave speeds (``slow waves'') and large wave speeds (``fast waves''). Fast waves are stable \cite{JonesFHN} and slow waves are unstable \cite{Flores}. It has been shown that double-pulse homoclinic orbits \cite{EvansFenichelFeroe} are possible. If (\ref{eq:fhn_temp}) has two equilibrium points and heteroclinic connections exist, bifurcation from a twisted double heteroclinic connection implies the existence of multi-pulse traveling front and back waves \cite{DengFHN}. These results are based on the assumption of certain parameter ranges for which we refer to the original papers. Geometric singular perturbation theory has been used successfully to analyze (\ref{eq:fhn_temp}). In \cite{JonesKopellLanger} the fast pulse is constructed using the exchange lemma \cite{JonesKaperKopell,JonesKopell,Brunovsky}. The exchange lemma has also been used to prove the existence of a codimension two connection between fast and slow waves in ($s,\epsilon,a$)-parameter space \cite{KSS1997}. An extension of Fenichel's theorem and Melnikov's method can be employed to prove the existence of heteroclinic connections for parameter regimes of (\ref{eq:fhn_temp}) with two fixed points \cite{Szmolyan1}. The general theory of relaxation oscillations in fast-slow systems applies to (\ref{eq:fhn_temp}) (see e.g. \cite{MisRoz,GuckenheimerBoRO}) as does - at least partially - the theory of canards (see e.g. \cite{Wechselberger,Dumortier1,Eckhaus,KruSzm2}).\\

The equations (\ref{eq:fhn_temp}) have been analyzed numerically by Champneys, Kirk, Knobloch, Oldeman and Sneyd \cite{Sneydetal} using the numerical bifurcation software AUTO \cite{Doedel_AUTO1997,Doedel_AUTO2000}. They considered the following parameter values:
\begin{equation*}
\gamma=1,\qquad a=\frac{1}{10},\qquad \delta=5
\end{equation*}
We shall fix those values to allow comparison of our results with theirs. Hence we also write $f_{1/10}(u)=f(u)$. Changing from the fast time $t$ to the slow time $\tau$ and relabeling variables $x_1=u$, $x_2=v$ and $y=w$ we get:
\begin{eqnarray}
\label{eq:fhn}
\epsilon\dot{x}_1&=& x_2\nonumber\\
\epsilon\dot{x}_2&=&\frac15 (sx_2-x_1(x_1-1)(\frac{1}{10}-x_1)+y-p)=\frac15 (sx_2-f(x_1)+y-p)\\
\dot{y}&=&\frac{1}{s} (x_1-y) \nonumber 
\end{eqnarray}
From now on we refer to (\ref{eq:fhn}) as ``the'' FitzHugh-Nagumo equation. Investigating bifurcations in the ($p,s$) parameter space one finds C-shaped curves of homoclinic orbits and a U-shaped curve of Hopf bifurcations; see Figure \ref{fig:cusystem}. Only part of the bifurcation diagram is shown in Figure \ref{fig:cusystem}. There is another curve of homoclinic bifurcations on the right side of the U-shaped Hopf curve. Since (\ref{eq:fhn}) has the symmetry
\begin{equation}
\label{eq:fhnsym}
x_1\rightarrow\frac{11}{15}-x_1, \quad x_2\rightarrow \frac{11}{15}-x_2,\quad y\rightarrow -y,\quad p\rightarrow \frac{11}{15}\left(1-\frac{33}{225}\right)-p
\end{equation}
we shall examine only the left side of the U-curve. The homoclinic C-curve is difficult to compute numerically by continuation methods using AUTO \cite{Doedel_AUTO1997,Doedel_AUTO2000} or MatCont \cite{MatCont}. The computations seem infeasible for small values of $\epsilon\leq 10^{-3}$. Furthermore multi-pulse homoclinic orbits can exist very close to single pulse ones and distinguishing between them must necessarily encounter problems with numerical precision \cite{Sneydetal}. The Hopf curve and the bifurcations of limit cycles shown in Figure \ref{fig:cusystem} have been computed using MatCont. The curve of homoclinic bifurcations has been computed by a new method to be described in Section \ref{sec:hetfull}.\\ 

\begin{figure}[htbp]
\centering
  \includegraphics[width=0.85\textwidth]{./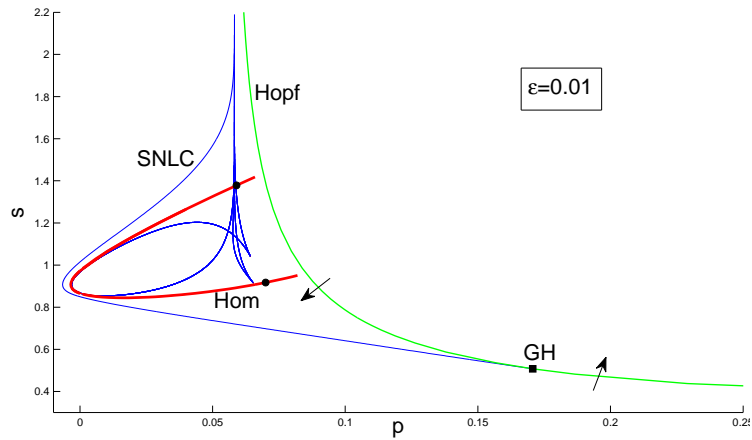} 
 \caption{Bifurcation diagram of (\ref{eq:fhn}). Hopf bifurcations are shown in green, saddle-node of limit cycles (SNLC) are shown in blue and GH indicates a generalized Hopf (or Bautin) bifurcation. The arrows indicate the side on which periodic orbits are generated at the Hopf bifurcation. The red curve shows (possible) homoclinic orbits; in fact, homoclinic orbits only exist to the left of the two black dots (see Section \ref{sec:hetfull}). Only part of the parameter space is shown because of the symmetry (\ref{eq:fhnsym}). The homoclinic curve has been thickened to indicate that multipulse homoclinic orbits exist very close to single pulse ones (see \cite{EvansFenichelFeroe}).}
 \label{fig:cusystem}
 \end{figure}

Since the bifurcation structure shown in Figure \ref{fig:cusystem} was also observed for other excitable systems, Champneys et al.~\cite{Sneydetal} introduced the term CU-system. Bifurcation analysis from the viewpoint of geometric singular perturbation theory has been carried out for examples with one fast and two slow variables \cite{GuckFvdP1,GuckFvdP2,GuckFT,MilikSzmolyan}. Since the FitzHugh-Nagumo equation has one slow and two fast variables, the situation is quite different and new techniques have to be developed. Our main goal is to show that many features of the complicated 2-parameter bifurcation diagram shown in Figure \ref{fig:cusystem} can be derived with a combination of techniques from singular perturbation theory, bifurcation theory and robust numerical methods. We accurately locate where the system has canards and determine the orbit structure of the homoclinic and periodic orbits associated to the C-shaped and U-shaped bifurcation curves, without computing the canards themselves. We demonstrate that the basic CU-structure of the system can be computed with elementary methods that do not use continuation methods based on collocation. The analysis of the slow and fast subsystems yields a ``singular bifurcation diagram'' to which the basic CU structure in Figure \ref{fig:cusystem} converges as $\epsilon\rightarrow 0$.\\

\textit{Remark:} We have also investigated the termination mechanism of the C-shaped homoclinic curve described in \cite{Sneydetal}. Champneys et al. observed that the homoclinic curve does not reach the U-shaped Hopf curve but turns around and folds back close to itself. We compute accurate approximations of the homoclinic orbits for smaller values $\epsilon$ than seems possible with AUTO in this region. One aspect of our analysis is a new algorithm for computing invariant slow manifolds of saddle type in the full system. This work will be described elsewhere.

\section{The Singular Limit}

The first step in our analysis is to investigate the slow and fast subsystems separately. Let $\epsilon\rightarrow 0$ in (\ref{eq:fhn}); this yields two algebraic constraints that define the critical manifold:
\begin{equation*}
C_0=\left\{(x_1,x_2,y)\in\mathbb{R}^3:x_2=0\quad y=x_1(x_1-1)(\frac{1}{10}-x_1)+p=c(x_1)\right\}
\end{equation*}
Therefore $C_0$ is a cubic curve in the coordinate plane $x_2=0$. The parameter $p$ moves the cubic up and down inside this plane. The critical points of the cubic are solutions of $c'(x_1)=0$ and are given by: 
\begin{equation*}
x_{1,\pm}=\frac{1}{30}\left(11\pm\sqrt{91}\right)\qquad \text{or numerically:} \quad x_{1,+}\approx 0.6846, \quad x_{1,-}\approx 0.0487
\end{equation*}
The points $x_{1,\pm}$ are fold points with $|c''(x_{1,\pm})|\neq 0$ since $C_0$ is a cubic polynomial with distinct critical points. The fold points divide $C_0$ into three segments 
\begin{equation*}
C_l=\{x_1<x_{1,-}\}\cap C_0, \quad C_m=\{x_{1,-}\leq x_1\leq x_{1,+}\}\cap C_0,\quad C_r=\{x_{1,+}<x_1\}\cap C_0
\end{equation*}
We denote the associated slow manifolds by $C_{l,\epsilon}$, $C_{m,\epsilon}$ and $C_{r,\epsilon}$. There are two possibilities to obtain the slow flow. One way is to solve $c(x_1)=y$ for $x_1$ and substitute the result into the equation $\dot{y}=\frac1s (x_1-y)$. Alternatively differentiating $y=c(x_1)$ implicitly with respect to $\tau$ yields $\dot{y}=\dot{x}_1c'(x_1)$ and therefore
\begin{equation}
\label{eq:sf}
\frac1s (x_1-y)=\dot{x}_1c'(x_1) \qquad \Rightarrow  \qquad \dot{x}_1=\frac{1}{sc'(x_1)}(x_1-c(x_1))
\end{equation}
One can view this as a projection of the slow flow, which is constrained to the critical manifold in $\mathbb{R}^3$, onto the $x_1$-axis. Observe that the slow flow is singular at the fold points. Direct computation shows that the fixed point problem $x_1=c(x_1)$ has only a single real solution. This implies that the critical manifold intersects the diagonal $y=x_1$ only in a single point $x_1^*$ which is the unique equilibrium of the slow flow (\ref{eq:sf}). Observe that $q=(x_1^*,0,x_1^*)$ is also the unique equilibrium of the full system (\ref{eq:fhn}) and depends on $p$. Increasing $p$ moves the equilibrium from left to right on the critical manifold. The easiest practical way to determine the direction of the slow flow on $C_0$ is to look at the sign of $(x_1-y)$. The situation is illustrated in Figure \ref{fig:slowflow}.\\ 

\begin{figure}[htbp]
	\centering
		\includegraphics[width=0.8\textwidth]{./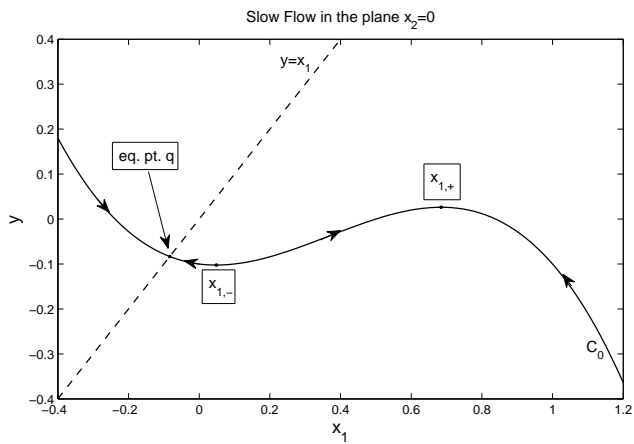}
	\caption{Sketch of the slow flow on the critical manifold $C_0$}
	\label{fig:slowflow}
\end{figure} 

\subsection{The Slow Flow}

We are interested in the bifurcations of the slow flow depending on the parameter $p$. The bifurcations occur when $x_1^*$ passes through the fold points. The values of $p$ can simply be found by solving the equations $c'(x_1)=0$ and $c(x_1)-x_1=0$ simultaneously. The result is:
\begin{equation*}
p_-\approx 0.0511 \qquad  \text{and} \qquad p_+\approx 0.5584
\end{equation*}
where the subscripts indicate the fold point at which each equilibrium is located.\\ 

The singular time-rescaling $\bar{\tau}=sc'(x_1)/\tau$ of the slow flow yields the desingularized slow flow
\begin{equation}
\label{eq:dssf}
\frac{dx_1}{d\bar{\tau}}=x_1-c(x_1)=x_1+\frac{x_1}{10}\left(x_1-1\right)\left(10x_1-1\right)-p
\end{equation}
Time is reversed by this rescaling on $C_l$ and $C_r$ since $s>0$ and $c'(x_1)$ is negative on these branches. The desingularized slow flow (\ref{eq:dssf}) is smooth and has no bifurcations as $p$ is varied. 

\subsection{The Fast Subsystem}
\label{sec:hetsub}

The key component of the fast-slow analysis for the FitzHugh-Nagumo equation is the two-dimensional fast subsystem
\begin{eqnarray}
\label{eq:fss}
x_1'&=&x_2 \nonumber\\
x_2'&=&\frac15 (sx_2-x_1(x_1-1)(\frac{1}{10}-x_1)+y-p)
\end{eqnarray}
where $p\geq 0$, $s\geq 0$ are parameters and $y$ is fixed. Since $y$ and $p$ have the same effect as bifurcation parameters we set $p-y=\bar{p}$. We consider several fixed y-values and the effect of varying $p$ (cf. Section \ref{sec:hetfull}) in each case. There are either one, two or three equilibrium points for (\ref{eq:fss}). Equilibrium points satisfy $x_2=0$ and lie on the critical manifold, i.e. we have to solve
\begin{equation}
\label{eq:eqfss}
0=x_{1}(x_{1}-1)(\frac{1}{10}-x_{1})+\bar{p}
\end{equation}
for $x_1$. We find that there are three equilibria for approximately $\bar{p}_l=-0.1262<\bar{p}<0.0024=\bar{p}_r$, two equilibria on the boundary of this $p$ interval and one equilibrium otherwise. The Jacobian of (\ref{eq:fss}) at an equilibrium is
\begin{equation*}
A(x_1)=\left(\begin{array}{cc}
0& 1\\
\frac{1}{50}\left(1-22x_1+30x_1^2\right) &\frac{s}{5}\\
\end{array}\right)
\end{equation*}
Direct calculation yields that for $p\not\in[\bar{p}_l,\bar{p}_r]$ the single equilibrium is a saddle. In the case of three equilibria, we have a source that lies between two saddles. Note that this also describes the stability of the three branches of the critical manifold $C_l$, $C_m$ and $C_r$. For $s>0$ the matrix $A$ is singular of rank 1 if and only if $30x_1^2-22x_1+1=0$ which occurs for the fold points $x_{1,\pm}$. Hence the equilibria of the fast subsystem undergo a fold (or saddle-node) bifurcation once they approach the fold points of the critical manifold. This happens for parameter values $\bar{p}_l$ and $\bar{p}_r$. Note that by symmetry we can reduce to studying a single fold point. In the limit $s=0$ (corresponding to the case of a ``standing wave'') the saddle-node bifurcation point becomes more degenerate with $A(x_1)$ nilpotent.\\ 

Our next goal is to investigate global bifurcations of (\ref{eq:fss}); we start with homoclinic orbits. For $s=0$ it is easy to see that (\ref{eq:fss}) is a Hamiltonian system:
\begin{eqnarray}
\label{eq:fss2}
x_1'&=&\frac{\partial H}{\partial x_2}=x_2\nonumber\\
x_2'&=&-\frac{\partial H}{\partial x_1}=\frac15 (-x_1(x_1-1)(\frac{1}{10}-x_1)-\bar{p})
\end{eqnarray}
with Hamiltonian function
\begin{equation}
\label{eq:ham}
H(x_1,x_2)=\frac12 x_2^2-\frac{(x_1)^2}{100}+\frac{11(x_1)^3}{150}-\frac{(x_1)^4}{20}+\frac{x_1\bar{p}}{5}
\end{equation}
We will use this Hamiltonian formulation later on to describe the geometry of homoclinic orbits for slow wave speeds. Assume that $\bar{p}$ is chosen so that 
(\ref{eq:fss2}) has a homoclinic orbit $x_0(t)$. We are interested in perturbations with $s> 0$ and note that in this case the divergence of (\ref{eq:fss}) is $s$. Hence the vector field is area expanding everywhere. The homoclinic orbit breaks for $s>0$ and no periodic orbits are created. Note that this scenario does not apply to the full three-dimensional system as the equilibrium $q$ has a pair of complex conjugate eigenvalues so that a Shilnikov scenario can occur. This illustrates that the singular limit can be used to help locate homoclinic orbits of the full system, but that some characteristics of these orbits change in the singular limit.\\

We are interested next in finding curves in $(\bar{p},s)$-parameter space that represent heteroclinic connections of the fast subsystem. The main motivation is the decomposition of trajectories in the full system into slow and fast segments. Concatenating fast heteroclinic segments and slow flow segments can yield homoclinic orbits of the full system \cite{Hastings,Carpenter,JonesKopellLanger,KSS1997}. We describe a numerical strategy to detect heteroclinic connections in the fast subsystem and continue them in parameter space. Suppose that $\bar{p}\in(\bar{p}_l,\bar{p}_r)$ so that (\ref{eq:fss}) has three hyperbolic equilibrium points $x_l$, $x_m$ and $x_r$. We denote by $W^u(x_{l})$ the unstable and by $W^s(x_{l})$ the stable manifold of $x_l$. The same notation is also used for $x_r$ and tangent spaces to $W^s(.)$ and $W^u(.)$ are denoted by $T^s(.)$ and $T^u(.)$. Recall that $x_m$ is a source and shall not be of interest to us for now. Define the  cross section $\Sigma$ by
\begin{equation*}
\Sigma=\{(x_1,x_2)\in\mathbb{R}^2:x_1=\frac{x_l+x_r}{2}\}.
\end{equation*}
We use forward integration of initial conditions in $T^u(x_l)$ and backward integration of initial conditions in $T^s(x_r)$ to obtain trajectories $\gamma^+$ and $\gamma^-$ respectively. We calculate their intersection with $\Sigma$ and define
\begin{equation*}
\gamma_l(\bar{p},s):=\gamma^+\cap \Sigma, \qquad \gamma_r(\bar{p},s):=\gamma^-\cap \Sigma
\end{equation*}
We compute the functions $\gamma_l$ and $\gamma_r$ for different parameter values of $(\bar{p},s)$ numerically. Heteroclinic connections occur at zeros of the function
\begin{equation*}
h(\bar{p},s):=\gamma_l(\bar{p},s)-\gamma_r(\bar{p},s)
\end{equation*}
Once we find a parameter pair $(\bar{p}_0,s_0)$ such that $h(\bar{p}_0,s_0)=0$, these parameters can be continued along a curve of heteroclinic connections in $(\bar{p},s)$ parameter space by solving the root-finding problem $h(\bar{p}_0+\delta_1,s_0+\delta_2)=0$ for either $\delta_1$ or $\delta_2$ fixed and small. We use this method later for different fixed values of $y$ to compute heteroclinic connections in the fast subsystem in $(p,s)$ parameter space. The results of these computations are illustrated in Figure \ref{fig:het1}. There are two distinct branches in Figure \ref{fig:het1}. The branches are asymptotic to $\bar{p}_l$ and $\bar{p}_r$ and approximately form a ``$V$''. From Figure \ref{fig:het1} we conjecture that there exists a double heteroclinic orbit for $\bar{p}\approx-0.0622$.\\
 
\begin{figure}[htbp]
\centering
\psfrag{p}{$\bar{p}$}
\psfrag{s}{$s$}
  \includegraphics[width=0.5\textwidth]{./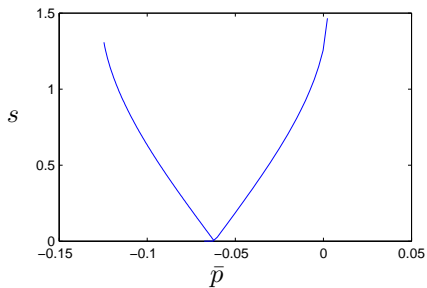} 
 \caption{Heteroclinic connections for equation (\ref{eq:fss}) in parameter space.}
 \label{fig:het1}
 \end{figure} 

\textit{Remarks}: If we fix $p=0$ our initial change of variable becomes $-y=\bar{p}$ and our results for heteroclinic connections are for the FitzHugh-Nagumo equation without an applied current. In this situation it has been shown that the heteroclinic connections of the fast subsystem can be used to prove the existence of homoclinic orbits to the unique saddle equilibrium $(0,0,0)$ (cf. \cite{JonesKopellLanger}). Note that the existence of the heteroclinics in the fast subsystem was proved in a special case analytically \cite{AronsonWeinberger} but Figure \ref{fig:het1} is - to the best of our knowledge - the first explicit computation of where fast subsystem heteroclinics are located. The paper \cite{KrLinMethod} develops a method for finding heteroclinic connections by the same basic approach we used, i.e. defining a codimension one hyperplane $H$ that separates equilibrium points.\\

Figure \ref{fig:het1} suggests that there exists a double heteroclinic connection for $s=0$. Observe that the Hamiltonian in our case is $H(x_1,x_2)=\frac{(x_2)^2}{2}+V(x_1)$ where the function $V(x_1)$ is:
\begin{equation*}
V(x_1)=\frac{px_1}{5}-\frac{(x_1)^2}{100}+\frac{11(x_1)^3}{150}-\frac{(x_1)^4}{20}
\end{equation*}
The solution curves of (\ref{eq:fss2}) are given by $x_2=\pm\sqrt{2(\text{const. }-V(x_1))}$. The structure of the solution curves entails symmetry under reflection about the $x_1$-axis. Suppose $\bar{p}\in[\bar{p}_l,\bar{p}_r]$ and recall that we denoted the two saddle points of (\ref{eq:fss}) by $x_l$ and $x_r$ and that their location depends on $\bar{p}$. Therefore, we conclude that the two saddles $x_l$ and $x_r$ must have a heteroclinic connection if they lie on the same energy level, i.e. they satisfy $V(x_l)-V(x_r)=0$. This equation can be solved numerically to very high accuracy. 

\begin{proposition}
\label{prop:doublehet}
The fast subsystem of the FitzHugh-Nagumo equation for $s=0$ has a double heteroclinic connection for $\bar{p}=\bar{p}^*\approx -0.0619259$. Given a particular value $y=y_0$ there exists a double heteroclinic connection for $p=\bar{p}^*+y_0$ in the fast subsystem lying in the plane $y=y_0$.
\end{proposition}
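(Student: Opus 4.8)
\emph{Proof proposal.} The strategy is to exploit the Hamiltonian structure of the $s=0$ fast subsystem (\ref{eq:fss2}) and reduce the existence of a double heteroclinic connection to a one-variable equation. Throughout, write $H(x_1,x_2)=\tfrac12 x_2^2+V(x_1)$ with $V$ as in (\ref{eq:ham}); comparing $x_2'=-\partial H/\partial x_1$ with (\ref{eq:fss2}) gives $V'(x_1)=\tfrac15\big(x_1(x_1-1)(\tfrac1{10}-x_1)+\bar p\big)$, so the equilibria of (\ref{eq:fss2}) are exactly the roots of (\ref{eq:eqfss}). For $\bar p\in(\bar p_l,\bar p_r)$ there are three of them, $x_l<x_{1,-}<x_m<x_{1,+}<x_r$; the sign of $V'$ between consecutive roots shows that $V$ has strict local maxima at $x_l$ and $x_r$ and a strict local minimum at $x_m$, and, as noted after (\ref{eq:eqfss}), $x_l$ and $x_r$ are hyperbolic saddles while $x_m$ is a center.

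The first real step is the equivalence: \emph{the saddles $x_l$ and $x_r$ are joined by a double heteroclinic loop of} (\ref{eq:fss2}) \emph{if and only if} $V(x_l)=V(x_r)$. The ``only if'' is immediate, since $H$ is constant along any heteroclinic orbit and equals $V(x_l)$, resp.\ $V(x_r)$, at its two endpoints. For the ``if'', put $E^\ast=V(x_l)=V(x_r)$. The polynomial $V(x_1)-E^\ast$ has degree $4$ with leading coefficient $-\tfrac1{20}$ and, since $V(x_l)=V(x_r)=E^\ast$ and $V'(x_l)=V'(x_r)=0$, vanishes to order at least two at $x_l$ and at $x_r$; hence $V(x_1)-E^\ast=-\tfrac1{20}(x_1-x_l)^2(x_1-x_r)^2$. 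Therefore $E^\ast-V(x_1)=\tfrac1{20}(x_1-x_l)^2(x_1-x_r)^2\ge 0$ for all $x_1$, the level set $\{H=E^\ast\}$ meets $\{x_2=0\}$ only at $x_l$ and $x_r$ (note $V(x_m)<E^\ast$, so $x_m$ is not on it), and the portion of $\{H=E^\ast\}$ with $x_1\in[x_l,x_r]$ is the closed curve $\Gamma=\{(x_1,\pm\tfrac1{\sqrt{10}}(x_1-x_l)(x_r-x_1)):x_1\in[x_l,x_r]\}$. This curve is invariant and contains exactly the two equilibria $x_l,x_r$, so each of its two open arcs is a single orbit; since $x_1'=x_2$, the upper arc runs from $x_l$ to $x_r$ and the lower one from $x_r$ to $x_l$, and hyperbolicity of the saddles makes these genuine heteroclinic connections. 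Thus $\Gamma$ is a double heteroclinic loop.

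Next I would prove existence of the claimed parameter value. Set $\Delta(\bar p)=V(x_l(\bar p))-V(x_r(\bar p))$, a continuous function on $[\bar p_l,\bar p_r]$ (the outer roots vary continuously, merging with $x_m$ only at the endpoints). As $\bar p\to\bar p_l^+$ the equilibria $x_m$ and $x_r$ coalesce at the fold $x_{1,+}$ while $x_l$ stays left of $x_{1,-}$; on $[x_l,x_{1,+}]$ at $\bar p=\bar p_l$ the cubic $5V'$ factors as $-(x_1-x_l)(x_1-x_{1,+})^2$, so it is $\le 0$ and vanishes only at the two endpoints, $V$ is strictly decreasing there, and $\Delta(\bar p_l)=V(x_l)-V(x_{1,+})>0$. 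By the mirror argument (or directly from the symmetry (\ref{eq:fhnsym})), as $\bar p\to\bar p_r^-$ the pair $x_l,x_m$ coalesces at $x_{1,-}$, $V$ is strictly increasing from $x_{1,-}$ to $x_r$, and $\Delta(\bar p_r)<0$. The intermediate value theorem then produces $\bar p^\ast\in(\bar p_l,\bar p_r)$ with $\Delta(\bar p^\ast)=0$, so by the equivalence above the fast subsystem at $s=0$, $\bar p=\bar p^\ast$ carries a double heteroclinic connection.

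Finally, to identify $\bar p^\ast$, I would match the coefficients of $x_1^3,x_1^2,x_1$ in $V(x_1)-E^\ast=-\tfrac1{20}(x_1-x_l)^2(x_1-x_r)^2$ (valid at $\bar p=\bar p^\ast$): this gives $x_l+x_r=\tfrac{11}{15}$, $x_lx_r=-\tfrac{38}{225}$ and $\bar p^\ast=-\tfrac{209}{3375}\approx-0.0619259$, and the configuration is consistent since the remaining root is then $x_m=\tfrac{11}{30}$ with $x_l=\tfrac{11-\sqrt{273}}{30}<x_{1,-}<\tfrac{11}{30}<x_{1,+}<\tfrac{11+\sqrt{273}}{30}=x_r$ and $\bar p^\ast\in(\bar p_l,\bar p_r)$, so the saddle--center--saddle picture used above does hold (alternatively one just solves $\Delta(\bar p^\ast)=0$ numerically, e.g.\ with interval arithmetic). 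For the last assertion of the proposition, note that (\ref{eq:fss}) depends on $p$ and $y$ only through $\bar p=p-y$: fixing $y=y_0$ and $p=\bar p^\ast+y_0$ reproduces (\ref{eq:fss2}) with $\bar p=\bar p^\ast$ in the plane $y=y_0$, which therefore contains the double heteroclinic connection just constructed. I expect the main obstacle to be the endpoint sign analysis of $\Delta$ --- correctly tracking which pair of equilibria collides at each fold and the monotonicity of $V$ on the intervening interval --- together with the bookkeeping that pins down the exact value $\bar p^\ast$; the remaining steps are routine.
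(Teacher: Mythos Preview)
Your proof is correct and follows the same core idea as the paper: exploit the Hamiltonian structure of (\ref{eq:fss2}) and reduce the problem to the equal-energy condition $V(x_l)=V(x_r)$. The paper's justification (the paragraph preceding Proposition~\ref{prop:doublehet}) is considerably briefer---it simply asserts that a heteroclinic connection exists when the saddles share an energy level and then solves $V(x_l)-V(x_r)=0$ numerically. You go further in two respects: you supply a rigorous existence argument via the intermediate value theorem on $\Delta(\bar p)$, and, by matching coefficients in the factorisation $V(x_1)-E^\ast=-\tfrac1{20}(x_1-x_l)^2(x_1-x_r)^2$, you obtain the \emph{exact} closed-form value $\bar p^\ast=-\tfrac{209}{3375}$ rather than a numerical approximation. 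Both additions strengthen the result without changing the underlying approach.
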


\subsection{Two Slow Variables, One Fast Variable}
\label{sec:2slow1fast}

From continuation of periodic orbits in the full system - to be described in Section \ref{sec:hopf} - we observe that near the U-shaped curve of Hopf bifurcations the $x_2$-coordinate is a faster variable than $x_1$. In particular, the small periodic orbits generated in the Hopf bifurcation lie almost in the plane $x_2=0$. Hence to analyze this region we set $\bar{x}_2=x_2/\epsilon$ to transform the FitzHugh-Nagumo equation (\ref{eq:fhn}) into a system with 2 slow and 1 fast variable:
\begin{eqnarray}
\label{eq:fhnscale}
\dot{x}_1&=& \bar{x}_2\nonumber\\
\epsilon^2 \dot{\bar{x}}_2&=&\frac15 (s\epsilon\bar{x}_2-x_1(x_1-1)(\frac{1}{10}-x_1)+y-p)\\
\dot{y}&=&\frac{1}{s} (x_1-y) \nonumber 
\end{eqnarray} 
Note that \eqref{eq:fhnscale} corresponds to the FitzHugh-Nagumo equation in the form (cf. \eqref{eq:fhn_original}):
\begin{equation}
\label{eq:fhn_original_small_diff}
\left\{
\begin{array}{l}
u_\tau=5\epsilon^2 u_{xx}+f(u)-w+p \\
w_\tau=\epsilon(u- w)
\end{array}
\right.
\end{equation}
Therefore the transformation $\bar{x}_2=x_2/\epsilon$ can be viewed as a rescaling of the diffusion strength by $\epsilon^2$. We introduce a new independent small parameter $\bar{\delta}=\epsilon^2$ and then let $\bar{\delta}=\epsilon^2\rightarrow 0$. This assumes that $O(\epsilon)$ terms do not vanish in this limit, yielding the diffusion free system. Then the slow manifold $S_0$ of (\ref{eq:fhnscale}) is:
\begin{equation}
\label{eq:sm2d}
S_0=\left\{ (x_1,\bar{x}_2,y) \in\mathbb{R}^3 : \bar{x}_2=\frac{1}{s\epsilon}\left(f(x_1)-y+p\right)\right\}
\end{equation}
\begin{proposition}
\label{prop:2slow1fast}
Following time rescaling by $s$, the slow flow of system~\eqref{eq:fhnscale} on $S_0$ in the variables $(x_1,y)$ is given by
\begin{eqnarray}
\label{eq:red1}
\epsilon \dot{x}_1&=&f(x_1)-y+p\nonumber\\
\dot{y}&=&x_1-y
\end{eqnarray}
In the variables $(x_1,\bar{x}_2)$ the vector field~\eqref{eq:red1} becomes
\begin{eqnarray}
\label{eq:red2}
\dot{x}_1&=& \bar{x}_2\nonumber\\
\epsilon\dot{\bar{x}}_2&=&-\frac{1}{s^2}\left(x_1-f(x_1)-p\right)+\frac{\bar{x}_2}{s}\left(f'(x_1)-\epsilon \right)
\end{eqnarray}
\end{proposition}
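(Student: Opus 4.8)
The plan is a direct computation: the proposition simply records the $\bar\delta=\epsilon^2\to 0$ reduced (slow) flow of \eqref{eq:fhnscale} restricted to the slow manifold $S_0$, written out in two different charts, so the proof is essentially a chain-rule exercise.

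First I would observe that the algebraic constraint defining $S_0$ is affine in $\bar x_2$ — the right-hand side of the second line of \eqref{eq:fhnscale} has $\bar x_2$-derivative $s\epsilon/5\neq 0$ — so it solves globally for the graph \eqref{eq:sm2d}, and the reduced flow on $S_0$ is obtained by substituting that graph into the equations for the remaining variables $x_1$ and $y$. Using $\dot x_1=\bar x_2$ and $\dot y=\tfrac1s(x_1-y)$ from \eqref{eq:fhnscale} this gives $\epsilon\dot x_1=\tfrac1s(f(x_1)-y+p)$ together with $\dot y=\tfrac1s(x_1-y)$; multiplying the vector field by $s$ — the ``time rescaling by $s$'' in the statement — clears the common factor $1/s$ and yields \eqref{eq:red1}.

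For the $(x_1,\bar x_2)$ form I would use $\bar x_2$ itself as the second coordinate on $S_0$: since \eqref{eq:sm2d} reads $s\epsilon\bar x_2=f(x_1)-y+p$, it inverts to $y=f(x_1)+p-s\epsilon\bar x_2$, a legitimate change of chart. The equation $\dot x_1=\bar x_2$ is then immediate (it is the first line of \eqref{eq:fhnscale}, valid identically on $S_0$). For the second line I would differentiate $s\epsilon\bar x_2=f(x_1)-y+p$ along the flow to get $s\epsilon\dot{\bar x}_2=f'(x_1)\dot x_1-\dot y$, substitute $\dot x_1=\bar x_2$, $\dot y=\tfrac1s(x_1-y)$, and finally $x_1-y=x_1-f(x_1)-p+s\epsilon\bar x_2$; collecting terms and dividing by $s$ produces exactly \eqref{eq:red2}.

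There is no genuine obstacle here — this is a bookkeeping statement about two parametrizations of one planar vector field — and the only point requiring care is keeping the two time scales straight. Equation \eqref{eq:red1} lives on the time rescaled by $s$, whereas in \eqref{eq:red2} the coordinate $\bar x_2$ is the actual $S_0$-value $\tfrac1{s\epsilon}(f(x_1)-y+p)$ rather than $dx_1/d\tau$ on the rescaled scale; this is precisely why the extra factors $1/s^2$ and $1/s$ appear and why \eqref{eq:red2} is naturally written in the original time $\tau$. Equivalently, passing from \eqref{eq:red1} to \eqref{eq:red2} requires both the chart change $y\leftrightarrow\bar x_2$ and undoing the factor-$s$ rescaling, and tracking that $s$ factor is the one thing that can go wrong in the calculation.
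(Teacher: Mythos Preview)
Your proposal is correct and follows essentially the same route as the paper: substitute the graph \eqref{eq:sm2d} into $\dot x_1=\bar x_2$ and rescale by $s$ to get \eqref{eq:red1}, then differentiate the defining relation of $S_0$ along the flow and eliminate $y$ to get \eqref{eq:red2}. Your closing observation that \eqref{eq:red2} is naturally written in the original time $\tau$ (so that passing from \eqref{eq:red1} to \eqref{eq:red2} involves both the chart change and undoing the factor-$s$ rescaling) is a point the paper glosses over, and catching it is exactly the care the computation requires.
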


\textit{Remark:} The reduction to equations \eqref{eq:red1}-\eqref{eq:red2} suggests that \eqref{eq:fhnscale} is a three time-scale system. Note however that \eqref{eq:fhnscale} is not given in the three time-scale form $(\epsilon^2\dot{z}_1,\epsilon\dot{z}_2,\dot{z}_3)=(h_1(z),h_2(z),h_3(z))$ for $z=(z_1,z_2,z_3)\in\mathbb{R}^3$ and $h_i:\mathbb{R}^3\rightarrow \mathbb{R}$ $(i=1,2,3)$. The time-scale separation in \eqref{eq:red1}-\eqref{eq:red2} results from the singular $1/\epsilon$ dependence of the critical manifold $S_0$; see \eqref{eq:sm2d}.

\begin{proof}(of Proposition \ref{prop:2slow1fast})
Use the defining equation for the slow manifold (\ref{eq:sm2d}) and substitute it into $\dot{x}_1=\bar{x}_2$. A rescaling of time by $t\rightarrow st$ under the assumption that $s>0$ yields the result (\ref{eq:red1}). To derive (\ref{eq:red2}) differentiate the defining equation of $S_0$ with respect to time:
\begin{equation*}
\dot{\bar{x}}_2=\frac{1}{s\epsilon}\left(\dot{x}_1f'(x_1)-\dot{y}\right)=\frac{1}{s\epsilon}\left(\bar{x}_2f'(x_1)-\dot{y}\right)
\end{equation*}
The equations $\dot{y}=\frac{1}{s}(x_1-y)$ and $y=-s\epsilon\bar{x}_2+f(x_1)+p$
yield the equations (\ref{eq:red2}).
\end{proof}
Before we start with the analysis of (\ref{eq:red1}) we note that detailed bifurcation calculations for \eqref{eq:red1} exist. For example, Rocsoreanu et al. \cite{GGR} give a detailed overview on the FitzHugh equation \eqref{eq:red1} and collect many relevant references. Therefore we shall only state the relevant bifurcation results and focus on the fast-slow structure and canards. Equation \eqref{eq:red1} has a critical manifold given by $y=f(x_1)+p=c(x_1)$ which coincides with the critical manifold of the full FitzHugh-Nagumo system (\ref{eq:fhn}). Formally it is located in $\mathbb{R}^2$ but we still denote it by $C_0$. Recall that the fold points are located at 
\begin{equation*}
x_{1,\pm}=\frac{1}{30}\left(11\pm\sqrt{91}\right)\qquad \text{or numerically:} \quad x_{1,+}\approx 0.6846, \quad x_{1,-}\approx 0.0487
\end{equation*}
Also recall that the y-nullcline passes through the fold points at:
\begin{equation*}
p_-\approx 0.0511 \qquad  \text{and} \qquad p_+\approx 0.5584
\end{equation*}
We easily find that supercritical Hopf bifurcations are located at the values
\begin{equation}
\label{eq:Hopfred}
p_{H,\pm}(\epsilon)=\frac{2057}{6750} \pm \sqrt{\frac{11728171}{182250000}-\frac{359 \epsilon }{1350}+\frac{509 \epsilon ^2}{2700}-\frac{\epsilon ^3}{27}}
\end{equation}
For the case $\epsilon=0.01$ we get $p_{H,-}(0.01)\approx 0.05632$ and $p_{H,+}(0.01)\approx 0.55316$. The periodic orbits generated in the Hopf bifurcations exist for $p\in (p_{H,-},p_{H,+})$. Observe also that $p_{H,\pm}(0)=p_\pm$; so the Hopf bifurcations of (\ref{eq:red1}) coincide in the singular limit with the fold bifurcations in the one-dimensional slow flow (\ref{eq:sf}). We are also interested in canards in the system and calculate a first order asymptotic expansion for the location of the maximal canard in (\ref{eq:red1}) following \cite{KruSzm3}; recall that trajectories lying in the intersection of attracting and repelling slow manifolds are called maximal canards. We restrict to canards near the fold point $(x_{1,-},c(x_{1,-}))$.

\begin{proposition}
Near the fold point $(x_{1,-},c(x_{1,-}))$ the maximal canard in $(p,\epsilon)$ parameter space is given by:
\begin{equation*}
p(\epsilon)=x_{1,-}-c(x_{1,-})+\frac58 \epsilon+O(\epsilon^{3/2})
\end{equation*}
\end{proposition}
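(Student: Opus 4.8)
The plan is to obtain the formula as an application of the asymptotic description of maximal canards near a generic fold point of a planar fast--slow system, due to Krupa and Szmolyan \cite{KruSzm3}, applied to the reduced equation \eqref{eq:red1}. The first step is to confirm that the configuration near $(x_{1,-},c(x_{1,-}))$ is indeed the canard one. The critical manifold of \eqref{eq:red1} is $C_0=\{y=c(x_1)\}$, and since $f$ is a cubic with distinct critical points the fold is nondegenerate: $c'(x_{1,-})=f'(x_{1,-})=0$ while $c''(x_{1,-})=f''(x_{1,-})=\tfrac{\sqrt{91}}{5}\neq 0$. The equilibrium $q$ of \eqref{eq:red1} lies exactly at this fold when $p=p_-$ (the value found earlier by solving $c'(x_1)=0$ and $c(x_1)-x_1=0$ simultaneously), and by \eqref{eq:Hopfred} it undergoes a supercritical Hopf bifurcation for $p$ near $p_-$; hence we are in a canard-explosion regime, and $\lambda:=p-p_-$ is the natural bifurcation parameter.

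The second step is to put \eqref{eq:red1} into the normal form used by \cite{KruSzm3}. Translating the fold to the origin via $x_1=x_{1,-}+w_1$, $y=c(x_{1,-})+w_2$, noting $x_{1,-}-c(x_{1,-})=p_--p=-\lambda$, and expanding $f$ about $x_{1,-}$ — the expansion $f(x_1)=f(x_{1,-})+\alpha w_1^2-w_1^3$ with $\alpha=\tfrac12 f''(x_{1,-})=\tfrac{\sqrt{91}}{10}$ is \emph{exact}, since $f$ is cubic with $f'''\equiv-6$ — equation \eqref{eq:red1}, written on the fast time scale, becomes
\begin{align*}
\frac{dw_1}{dt}&=-w_2+\alpha w_1^2-w_1^3,\\
\frac{dw_2}{dt}&=\epsilon\,(w_1-w_2-\lambda).
\end{align*}
The rescaling $w_1=u/\alpha$, $w_2=v/\alpha$ normalises the quadratic coefficient and yields the Krupa--Szmolyan form
\begin{align*}
\frac{du}{dt}&=-v+u^2-\frac{1}{\alpha^2}u^3,\\
\frac{dv}{dt}&=\epsilon\,(u-v-\alpha\lambda).
\end{align*}
The key observation is that the fast equation of \eqref{eq:red1} carries no explicit $\epsilon$, so the $\epsilon$-dependent term in the fast equation (denoted $h_3$ in the formulation of \cite{KruSzm3}) vanishes; the only nontrivial data entering their coefficient formula are the cubic coefficient $-1/\alpha^2=-100/91$ (from the cubic critical manifold) and the linear $-v$ term in the slow equation (from the $-y$ in $\dot y=x_1-y$), the latter distinguishing \eqref{eq:red1} from the textbook Li\'enard canard model.

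The third step is to substitute these data, together with $\alpha=\tfrac{\sqrt{91}}{10}$, into the Krupa--Szmolyan expansion for the maximal canard, which has the form $\lambda(\epsilon)=k_1\epsilon+O(\epsilon^{3/2})$ with $k_1$ an explicit combination of the normal-form coefficients; carrying this out and simplifying should give $k_1=\tfrac58$, whence $p(\epsilon)=p_-+\tfrac58\epsilon+O(\epsilon^{3/2})$, the assertion, since $p_-=x_{1,-}-c(x_{1,-})$. The $O(\epsilon^{3/2})$ remainder (rather than $O(\epsilon^2)$) reflects the $\sqrt{\epsilon}$ blow-up scaling underlying \cite{KruSzm3}. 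The main obstacle is not conceptual but lies in the bookkeeping: one must match \eqref{eq:red1} precisely with the hypotheses and normal form of \cite{KruSzm3} (their sign and orientation conventions, and a check that the terms they require to be $O(\sqrt{\epsilon})$-small indeed vanish here), correctly incorporate into their coefficient formula both the genuine cubic term and the $-v$ term — neither of which occurs in the simplest canard model — and then verify that the algebra collapses exactly to the rational value $\tfrac58$ rather than to a mere numerical approximation.
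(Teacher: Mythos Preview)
Your approach is correct and essentially identical to the paper's: translate the fold to the origin, shift the parameter so that the equilibrium sits at the fold for the new parameter value zero, and then invoke Theorem~3.1 of \cite{KruSzm3} to read off the leading coefficient $\tfrac{5}{8}$. The paper performs the same translation (using $\bar{y}=y-p$ and the shifts $x_1\to x_1+x_{1,-}$, $\bar{y}\to\bar{y}+c(x_{1,-})$, $p\to p+x_{1,-}-c(x_{1,-})$) and then simply cites the theorem, without carrying out the additional rescaling $w_i=u_i/\alpha$ or the explicit coefficient calculation that you outline; so your write-up is, if anything, more detailed about the normal-form bookkeeping than the paper itself.
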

\begin{proof}
Let $\bar{y}=y-p$ and consider the shifts 
\begin{equation*}
x_1\rightarrow x_1+x_{1,-},\quad \bar{y}\rightarrow \bar{y}+c(x_{1,-}), \quad p\rightarrow p+x_{1,-}-c(x_{1,-}) 
\end{equation*}
to translate the equilibrium of (\ref{eq:red1}) to the origin when $p=0$. This gives
\begin{eqnarray}
\label{eq:red1sf}
x_1'&=&x_1^2\left(\frac{\sqrt{91}}{10}-x_1\right)-\bar{y}=\bar{f}(x_1,\bar{y})\nonumber\\
y'&=&\epsilon(x_1-\bar{y}-p)=\epsilon (\bar{g}(x_1,\bar{y})-p)
\end{eqnarray}
Now apply Theorem 3.1 in \cite{KruSzm3} to find that the maximal canard of (\ref{eq:red1sf}) is given by:
\begin{equation*}
p(\epsilon)=\frac{5}{8}\epsilon+O(\epsilon^{3/2})
\end{equation*} 
Shifting the parameter $p$ back to the original coordinates yields the result.
\end{proof}
If we substitute $\epsilon=0.01$ in the previous asymptotic result and neglect terms of order $O(\epsilon^{3/2})$ then the maximal canard is predicted to occur for $p\approx 0.05731$ which is right after the first supercritical Hopf bifurcation at $p_{H,-}\approx 0.05632$. Therefore we expect that there exist canard orbits evolving along the middle branch of the critical manifold $C_{m,0.01}$ in the full FitzHugh-Nagumo equation. Maximal canards are part of a process generally referred to as canard explosion \cite{DRvdP,KruSzm2,Diener}. In this situation the small periodic orbits generated in the Hopf bifurcation at $p=p_{H,-}$ undergo a transition to relaxation oscillations within a very small interval in parameter space. A variational integral determines whether the canards are stable \cite{KruSzm2,GuckenheimerBoRO}. 

\begin{proposition}
\label{prop:stablecanards}
The canard cycles generated near the maximal canard point in parameter space for equation (\ref{eq:red1}) are stable.
\end{proposition}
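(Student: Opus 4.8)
The plan is to show that the single nontrivial Floquet exponent of each canard cycle $\Gamma_p$ of \eqref{eq:red1} is negative, by evaluating it in the singular limit; recall that a hyperbolic periodic orbit $\Gamma$ of a planar vector field $X$ is orbitally asymptotically stable precisely when $\oint_\Gamma \operatorname{div} X\, dt<0$. First I would bring \eqref{eq:red1} into the canard normal form of \cite{KruSzm3,KruSzm2} near the fold $(x_{1,-},c(x_{1,-}))$: the shift already used to obtain \eqref{eq:red1sf}, followed by a linear rescaling normalizing the quadratic term and the slow-drift term. In these coordinates the canard cycles form the one-parameter family existing in an exponentially thin $p$-interval around the maximal canard value $p(\epsilon)=x_{1,-}-c(x_{1,-})+\frac58\epsilon+O(\epsilon^{3/2})$, and as $\epsilon\to 0$ each $\Gamma_p$ decomposes into a slow segment along the attracting branch $C_l$, a slow segment along the repelling branch $C_m$, and --- for the larger cycles --- a pair of fast jumps together with additional slow segments along $C_l$ and $C_r$.

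Second, following \cite{KruSzm2} (see also \cite{GuckenheimerBoRO}), I would reduce the Floquet exponent to the slow divergence integral of $\Gamma_p$: the slow segments contribute at order $1/\epsilon$ and dominate the $O(1)$, resp. $O(\log(1/\epsilon))$, contributions of the fast jumps, so $\operatorname{sign}\bigl(\oint_{\Gamma_p}\operatorname{div} X\,dt\bigr)$ agrees for small $\epsilon$ with the sign of the slow divergence integral. Concretely, the layer-equation divergence along $C_0$ is $\partial_{x_1}\bigl(f(x_1)-y+p\bigr)=f'(x_1)$, and with the reduced flow $\dot x_1=(x_1-c(x_1))/c'(x_1)$ from \eqref{eq:sf} (recall $c'=f'$) the slow divergence integral is the integral of the rational function $(f'(x_1))^2/(x_1-c(x_1))$ over the $x_1$-range swept by the slow segments, with endpoints fixed by the entry--exit relation of the canard. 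The $C_l$-segment contributes a negative amount (normal contraction) and the $C_m$-segment a positive amount (normal expansion); near the maximal canard the $C_m$-segment has $O(1)$ length, so stability is the genuine inequality that the contraction along $C_l$ outweighs the expansion along $C_m$. Equivalently, \cite{KruSzm2} shows that for a family of canard cycles born in a supercritical singular Hopf bifurcation this sign is governed by the same combination of normal-form coefficients whose negativity characterizes supercriticality; since the Hopf point \eqref{eq:Hopfred} is supercritical, that combination is negative and the whole canard family --- in particular near the maximal canard --- is stable, with no intervening fold of cycles.

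It then remains to verify the sign for the explicit cubic $f$. Since $C_0$, the fold points and the reduced flow are all available in closed form, the slow divergence integral is an elementary integral of a rational function over an explicit interval and can be evaluated analytically, or to high numerical precision, and checked to be negative for $p$ throughout the canard window; the extra slow segments along $C_l$ and $C_r$ for the larger cycles only add further contraction and cannot spoil the sign. I expect the main obstacle to be the uniformity of the $\epsilon\to 0$ reduction of the Floquet integral across the exponentially thin range of canard parameters --- controlling the contribution of the fold/canard passage and of the fast jumps and showing they do not change the sign, and pinning down the slow-segment endpoints through the entry--exit map --- which is exactly the analysis of \cite{KruSzm2}; if that theorem is invoked directly, the only remaining work is the bookkeeping of the normal-form transformation needed to read off the relevant coefficient and confirm that it inherits the negative sign from the supercriticality of \eqref{eq:Hopfred}.
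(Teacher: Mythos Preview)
Your approach is essentially the paper's: both reduce the stability question to the slow divergence integral of \cite{KruSzm2} (Theorem~3.4 there), which the paper denotes $R(h)$ and, in the shifted coordinates of \eqref{eq:red1sf}, writes explicitly as
\[
R(h)=\int_{x_l(h)}^{x_m(h)}\frac{\bigl(\tfrac{\sqrt{91}}{5}x_1-3x_1^2\bigr)^2}{x_1-\tfrac{\sqrt{91}}{10}x_1^2+x_1^3}\,dx_1,
\]
i.e.\ exactly your $(f')^2/(x_1-c(x_1))$ integrated between the left and middle branches at height $h$. The paper then verifies $R(h)<0$ for all $h\in\bigl(0,\phi(\sqrt{91}/15)\bigr]$ by a numerical plot, backed by an algebraic computation showing $R'(h)<0$.

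One caution: the shortcut you sketch in your second paragraph --- inferring stability of the \emph{entire} canard family from the supercriticality of the Hopf point \eqref{eq:Hopfred} --- does not go through. The normal-form (Lyapunov) coefficient in \cite{KruSzm2} governs only the sign of $R$ near $h=0$, i.e.\ the small canards; for canards of $O(1)$ amplitude, in particular near the maximal canard, one must have $R(h)<0$ for all $h$ up to the height of the second fold. Supercriticality does not preclude $R$ changing sign at some intermediate $h$, which would produce a saddle-node of canard cycles. Hence the explicit sign check you describe in your third paragraph is not optional bookkeeping but the substance of the proof --- and that is precisely what the paper carries out.
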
 

\begin{proof}
Consider the differential equation (\ref{eq:red1}) in its transformed form (\ref{eq:red1sf}). Obviously this will not affect the stability analysis of any limit cycles. Let $x_l(h)$ and $x_m(h)$ denote the two smallest $x_1$-coordinates of the intersection between
\begin{equation*}
\bar{C}_0:=\{(x_1,\bar{y})\in\mathbb{R}^2:\bar{y}=\frac{\sqrt{91}}{10}x_1^2-x_1^3=\phi(x_1)\}
\end{equation*}
and the line $\bar{y}=h$. Geometrically $x_l$ represents a point on the left branch and $x_m$ a point on the middle branch of the critical manifold $\bar{C}_0$. Theorem 3.4 in \cite{KruSzm2} tells us that the canards are stable cycles if the function
\begin{equation*}
R(h)=\int_{x_l(h)}^{x_m(h)}\frac{\partial \bar{f}}{\partial x_1}(x_1,\phi(x_1))\frac{\phi'(x_1)}{\bar{g}(x_1,\phi(x_1))}dx_1
\end{equation*}
is negative for all values $h\in(0,\phi(\frac{\sqrt{91}}{15})]$ where $x_1=\frac{\sqrt{91}}{15}$ is the second fold point of $\bar{C}_0$ besides $x_1=0$. In our case we have
\begin{equation*}
R(h)=\int_{x_l(h)}^{x_m(h)}\frac{(\frac{\sqrt{91}}{5}x_1-3x_1^2)^2}{x-\frac{\sqrt{91}}{10}x_1^2+x_1^3}dx
\end{equation*}
with $x_l(h)\in[-\frac{\sqrt{91}}{30},0)$ and $x_m(h)\in(0,\frac{\sqrt{91}}{15}]$. Figure \ref{fig:canard_stability} shows a numerical plot of the function $R(h)$ for the relevant values of $h$ which confirms the required result.\\ 

\begin{figure}[htbp]
	\centering
		\includegraphics{./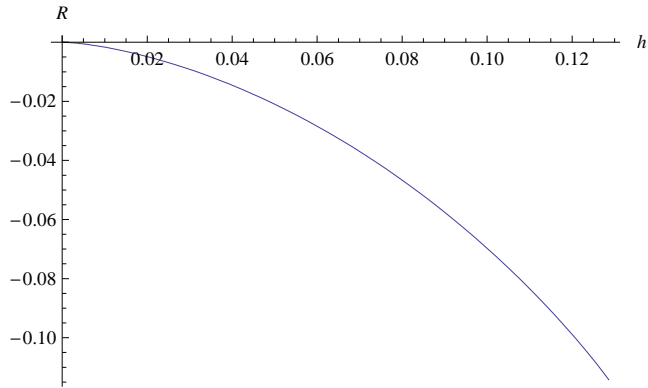}
	\caption{Plot of the function $R(h)$ for $h\in(0,\phi(\frac{\sqrt{91}}{15})]$.}
	\label{fig:canard_stability}
\end{figure}

\textit{Remark:} We have computed an explicit algebraic expression for $R'(h)$ with a computer algebra system. This expression yields $R'(h)<0$ for $h\in(0,\phi(\frac{\sqrt{91}}{15})]$, confirming that $R(h)$ is decreasing.

\end{proof}

As long as we stay on the critical manifold $C_0$ of the full system, the analysis of the bifurcations and geometry of (\ref{eq:red1}) give good approximations to the dynamics of the FitzHugh-Nagumo equation because the rescaling $x_2=\epsilon \bar{x}_2$ leaves the plane $x_2=0$ invariant. Next we use the dynamics of the $\bar{x}_2$-coordinate in system (\ref{eq:red2}) to obtain better insight into the dynamics  when $x_2\neq0$. The critical manifold $D_0$ of (\ref{eq:red2}) is:
\begin{equation*}
D_0=\{(x_1,\bar{x}_2)\in\mathbb{R}^2:s\bar{x}_2c'(x_1)=x_1-c(x_1)\}
\end{equation*}
We are interested in the geometry of the periodic orbits shown in Figure \ref{fig:subsysgeo} that emerge from the Hopf bifurcation at $p_{H,-}$. Observe that the amplitude of the orbits in the $x_1$ direction is much larger that than in the $x_2$-direction. Therefore we predict only a single small excursion in the $x_2$ direction for $p$ slightly larger than $p_{H,-}$ as shown in Figures \ref{fig:sub1} and \ref{fig:sub3}. The wave speed changes the amplitude of this $x_2$ excursion with a smaller wave speed implying a larger excursion. Hence equation (\ref{eq:red1}) is expected to be a very good approximation for periodic orbits in the FitzHugh-Nagumo equation with fast wave speeds. Furthermore the periodic orbits show two $x_2$ excursions in the relaxation regime after the canard explosion; see Figure \ref{fig:sub2}.

\begin{figure}[htbp]
 \subfigure[Small orbit near Hopf point ($p=0.058$, $s=1.37$)]{\includegraphics[width=0.3\textwidth]{./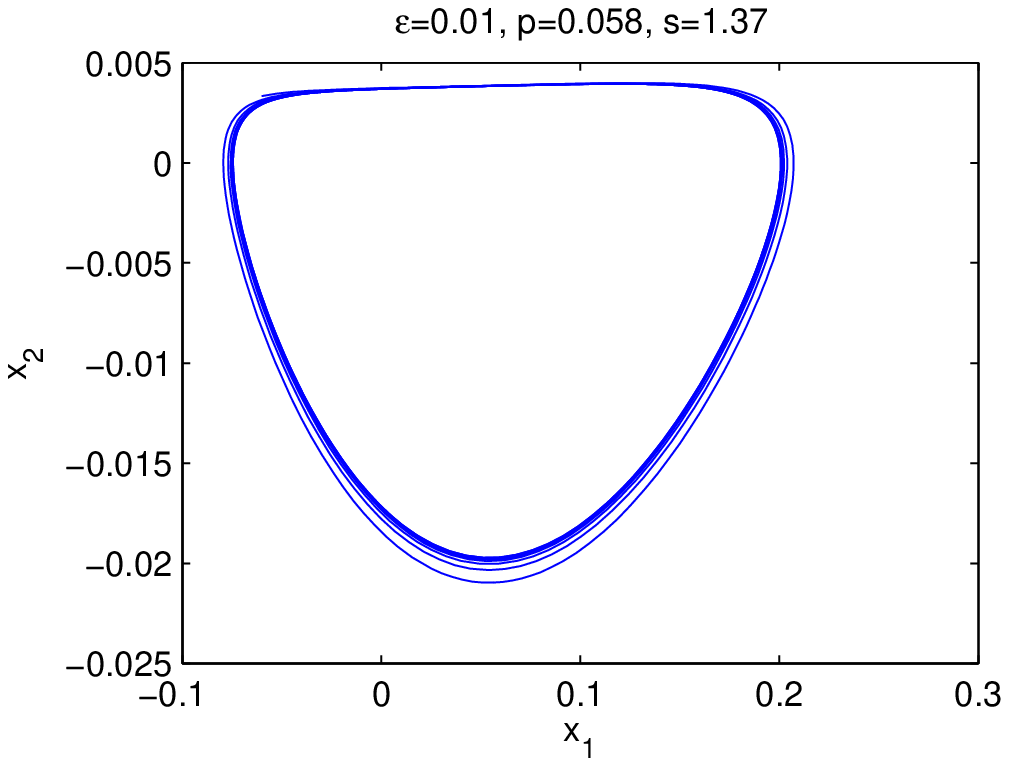} \label{fig:sub1}}
 \subfigure[Orbit after canard explosion ($p=0.06$, $s=1.37$)]{\includegraphics[width=0.3\textwidth]{./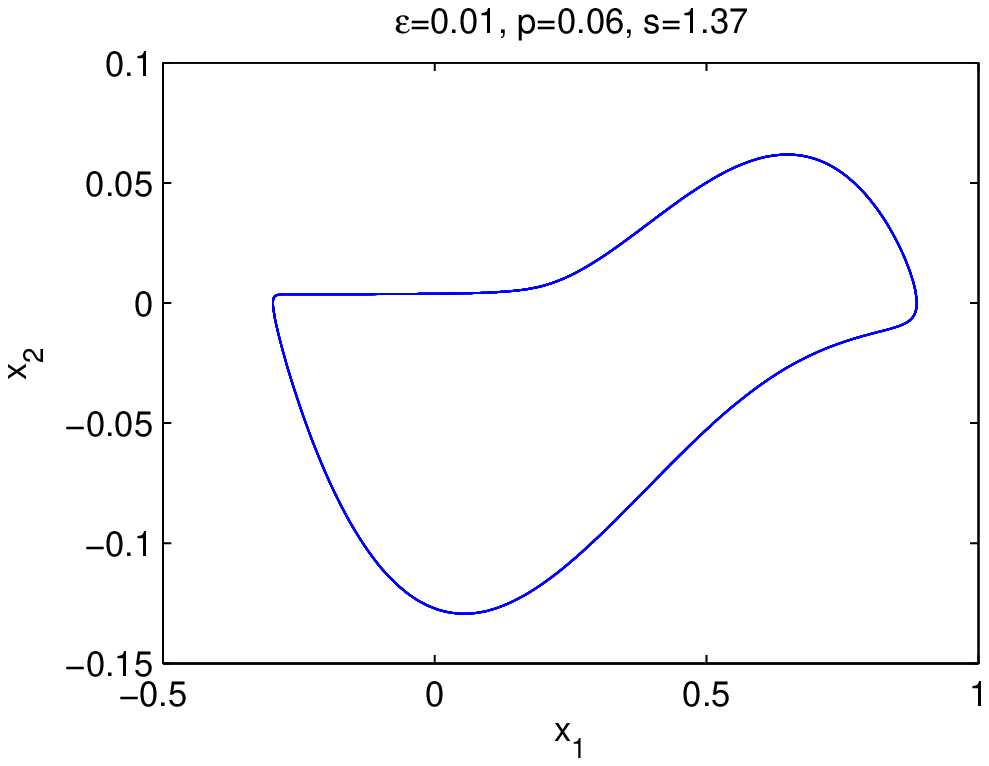}\label{fig:sub2}}
 \subfigure[Different wave speed ($p=0.058$, $s=0.2$)]{\includegraphics[width=0.3\textwidth]{./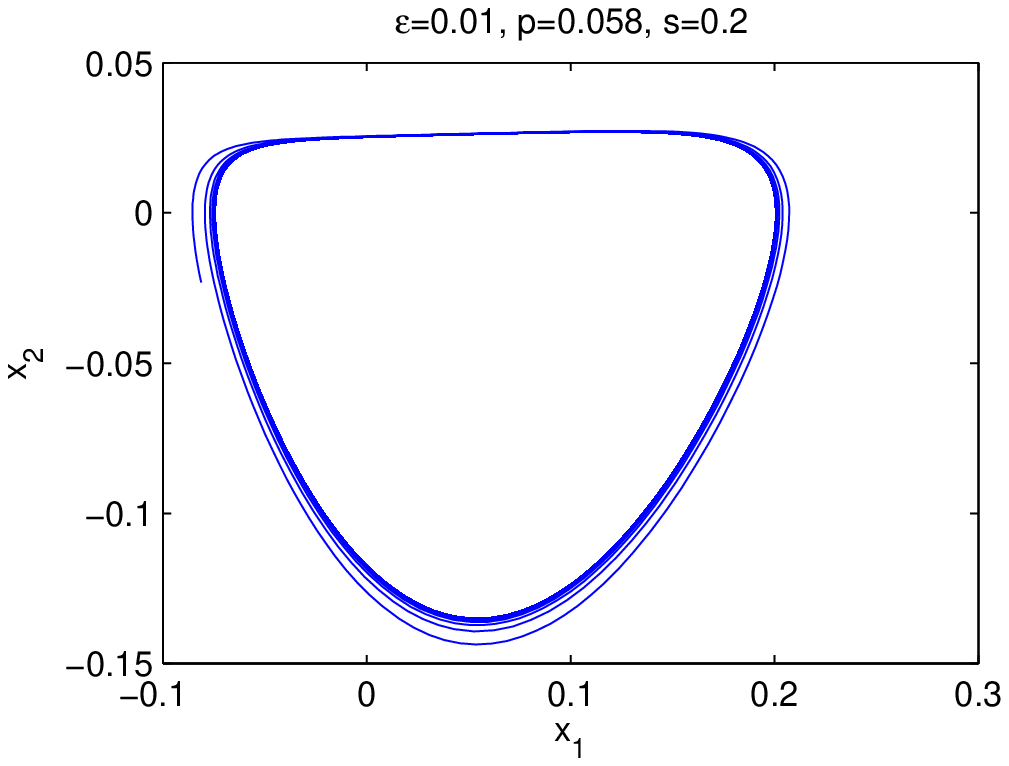}\label{fig:sub3}}
\caption{Geometry of periodic orbits in the $(x_1,x_2)$-variables of the 2-variable slow subsystem (\ref{eq:red2}). Note that here $x_2=\epsilon \bar{x}_2$ is shown. Orbits have been obtained by direct forward integration for $\epsilon=0.01$.}
\label{fig:subsysgeo}
\end{figure} 

\section{The Full System}
\subsection{Hopf Bifurcation}
\label{sec:hopf}
The characteristic polynomial of the linearization of the FitzHugh-Nagumo equation (\ref{eq:fhn}) at its unique equilibrium point is
\begin{equation*}
P(\lambda)=\frac{\epsilon }{5 s}+\left(-\frac{\epsilon }{s}-\lambda \right) \left(-\frac{1}{50}+\frac{11 x_1^*}{25}-\frac{3 (x_1^*)^2}{5}-\frac{s \lambda }{5}+\lambda ^2\right)
\end{equation*}
Denoting $P(\lambda)=c_0+c_1\lambda+c_2\lambda^2+c_3\lambda^3$, a necessary condition for $P$ to have pure imaginary roots is that $c_0 = c_1 c_2$. The solutions of this equation can be expressed parametrically as a curve $(p(x_1^*),s(x_1^*))$:
\begin{eqnarray}
\label{eq:solHopf}
s(x_1^*)^2 &=& \frac{50\epsilon(\epsilon - 1)}{1 + 10\epsilon -22 x_1^* +30 (x_1^*)^2} \nonumber \\
p(x_1^*) &=& (x_1^*)^3 - 1.1 (x_1^*)^2 + 1.1
\end{eqnarray}

\begin{proposition}
\label{prop:hopf}
In the singular limit $\epsilon\rightarrow 0$ the U-shaped bifurcation curves of the FitzHugh-Nagumo equation have vertical asymptotes given by the points $p_-\approx 0.0510636 $ and $p_+\approx 0.558418$ and a horizontal asymptote given by $\{(p,s):p\in[p_-,p_+]\quad\text{and}\quad s=0\}$. Note that at $p_\pm$ the equilibrium point passes through the two fold points.
\end{proposition}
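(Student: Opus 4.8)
The plan is to read the geometry of the Hopf locus directly off the parametrization (\ref{eq:solHopf}), using $x_1^*$ as the curve parameter and tracking how its admissible range and the induced $(p,s)$-values degenerate as $\epsilon\to 0$. First I would pin down the admissible range of $x_1^*$. For $\epsilon\in(0,1)$ the numerator $50\epsilon(\epsilon-1)$ of $s(x_1^*)^2$ is strictly negative, so $s(x_1^*)$ is real precisely where the denominator $N_\epsilon(x_1^*):=1+10\epsilon-22x_1^*+30(x_1^*)^2$ is negative, and there $s(x_1^*)^2=\frac{50\epsilon(1-\epsilon)}{-N_\epsilon(x_1^*)}$ with both pieces positive. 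Since $N_\epsilon$ is an upward parabola in $x_1^*$ with discriminant $364-1200\epsilon>0$ for small $\epsilon$, it has roots $r_\pm(\epsilon)=\tfrac{1}{30}(11\pm\sqrt{91-300\epsilon})$, so $N_\epsilon<0$ holds exactly on $(r_-(\epsilon),r_+(\epsilon))$ and $r_\pm(\epsilon)\to x_{1,\pm}$ as $\epsilon\to0$. Hence for each small $\epsilon$ the Hopf curve $\Gamma_\epsilon$ is the image of $(r_-(\epsilon),r_+(\epsilon))$ under $x_1^*\mapsto(p(x_1^*),s(x_1^*))$. Moreover $p(x_1^*)$ in (\ref{eq:solHopf}) is just the equilibrium relation $x_1^*=c(x_1^*)$ of (\ref{eq:fhn}) from Section~2, so $p'(x_1^*)=1-f'(x_1^*)$ is a quadratic with negative discriminant, hence positive, making $p:\mathbb{R}\to\mathbb{R}$ a strictly increasing bijection; thus $\Gamma_\epsilon$ is the graph of a function $s=S_\epsilon(p)$ over the interval between $p(r_-(\epsilon))$ and $p(r_+(\epsilon))$.

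For the vertical asymptotes, letting $x_1^*\to r_\pm(\epsilon)$ from inside the admissible interval gives $-N_\epsilon(x_1^*)\to0^+$ with the numerator bounded away from $0$, so $s(x_1^*)\to+\infty$, while $p(x_1^*)\to p(r_\pm(\epsilon))$ stays finite: $S_\epsilon$ blows up at both ends of its domain, i.e.\ the two arms of the ``U''. Letting $\epsilon\to0$ and using continuity, the endpoints satisfy $p(r_\pm(\epsilon))\to p(x_{1,\pm})=p_\pm$, the last equality being exactly the Section~2 statement that $p_\pm$ are the parameter values for which the equilibrium of (\ref{eq:fhn}) sits at the fold points $x_{1,\pm}$ (numerically $p_-\approx 0.0510636$, $p_+\approx 0.558418$). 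This gives the two vertical asymptotes.

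For the horizontal asymptote I would fix a compact interval $K\subset(p_-,p_+)$ and set $K':=p^{-1}(K)$, a fixed compact subinterval of $(x_{1,-},x_{1,+})$ (using the global inverse of $p$). Since $r_\pm(\epsilon)\to x_{1,\pm}$, for small $\epsilon$ we have $K'\subset(r_-(\epsilon),r_+(\epsilon))$, hence $K$ lies in the domain of $S_\epsilon$. On $K'$, $-N_\epsilon=-N_0-10\epsilon$ with $-N_0\geq\delta>0$ (as $N_0<0$ on the open interval $(x_{1,-},x_{1,+})$ and $K'$ is compact), so $-N_\epsilon\geq\delta/2$ for small $\epsilon$, and $S_\epsilon(p)^2=\frac{50\epsilon(1-\epsilon)}{-N_\epsilon(x_1^*)}=O(\epsilon)$ uniformly for $p\in K$; thus $S_\epsilon\to0$ uniformly on $K$. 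As $K$ was an arbitrary compact subset of $(p_-,p_+)$, the curves $\Gamma_\epsilon$ converge, away from the two vertical asymptotes, to the segment $\{(p,s):p\in[p_-,p_+],\ s=0\}$. Combining the three limits gives the limiting ``U''.

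I do not anticipate a serious obstacle here; the two points needing care are (i) fixing the sign of $N_\epsilon$ correctly, which simultaneously identifies the admissible $x_1^*$-interval and the $p$-domain of $\Gamma_\epsilon$, and (ii) making precise the sense in which ``the curves have these asymptotes in the limit'', which is handled by the uniform-on-compacts estimate for $s$ together with the endpoint and blow-up analysis. One should also note that (\ref{eq:solHopf}) encodes only the necessary condition $c_0=c_1c_2$ for purely imaginary eigenvalues; verifying that these are genuine codimension-one Hopf bifurcations (the third eigenvalue real, transversal crossing) is routine and is not needed for the asymptotic statement.
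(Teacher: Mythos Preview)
Your proposal is correct and follows essentially the same route as the paper: both read the asymptotics directly off the parametrization~(\ref{eq:solHopf}) by analyzing the sign and roots of the denominator $1+10\epsilon-22x_1^*+30(x_1^*)^2$ and tracking what happens as $\epsilon\to0$. Your version is considerably more detailed---you add the monotonicity of $p(x_1^*)$ to represent $\Gamma_\epsilon$ as a graph, and you make the convergence to the horizontal segment precise via a uniform-on-compacts estimate---whereas the paper argues more tersely, handling the vertical asymptotes by fixing a level $s^2=c>0$ and showing the corresponding $x_1^*$ is forced toward the roots of $1-22x_1^*+30(x_1^*)^2=0$; but the underlying idea is the same.
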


\begin{proof}
The expression for $s(x_1^*)^2$ in (\ref{eq:solHopf}) is positive when
$1 + 10\epsilon -22 x_1^* + 30 (x_1^*)^2 < 0$. For values of $x_1^*$ 
between the roots of $1 -22 x_1^* + 30 (x_1^*)^2 = 0$, $s(x_1^*)^2 \to 0$ in (\ref{eq:solHopf}) as $\epsilon \to 0$. The values of $p_-$ and $p_+$ in
the proposition are approximations to the value of $p(x_1^*)$ in
(\ref{eq:solHopf}) at the roots of $1 -22 x_1^* + 30 (x_1^*)^2 = 0$.
As $\epsilon \to 0$, solutions of the equation $s(x_1^*)^2 = c > 0$ 
in (\ref{eq:solHopf}) yield values of $x_1^*$ that tend to one of the 
two roots of $1 - 22 x_1^* + 30 (x_1^*)^2 = 0$. 
The result follows.
\end{proof}

\begin{figure}[htbp]
 \subfigure[Projection onto $(x_1,y)$]{\includegraphics[width=0.46\textwidth]{./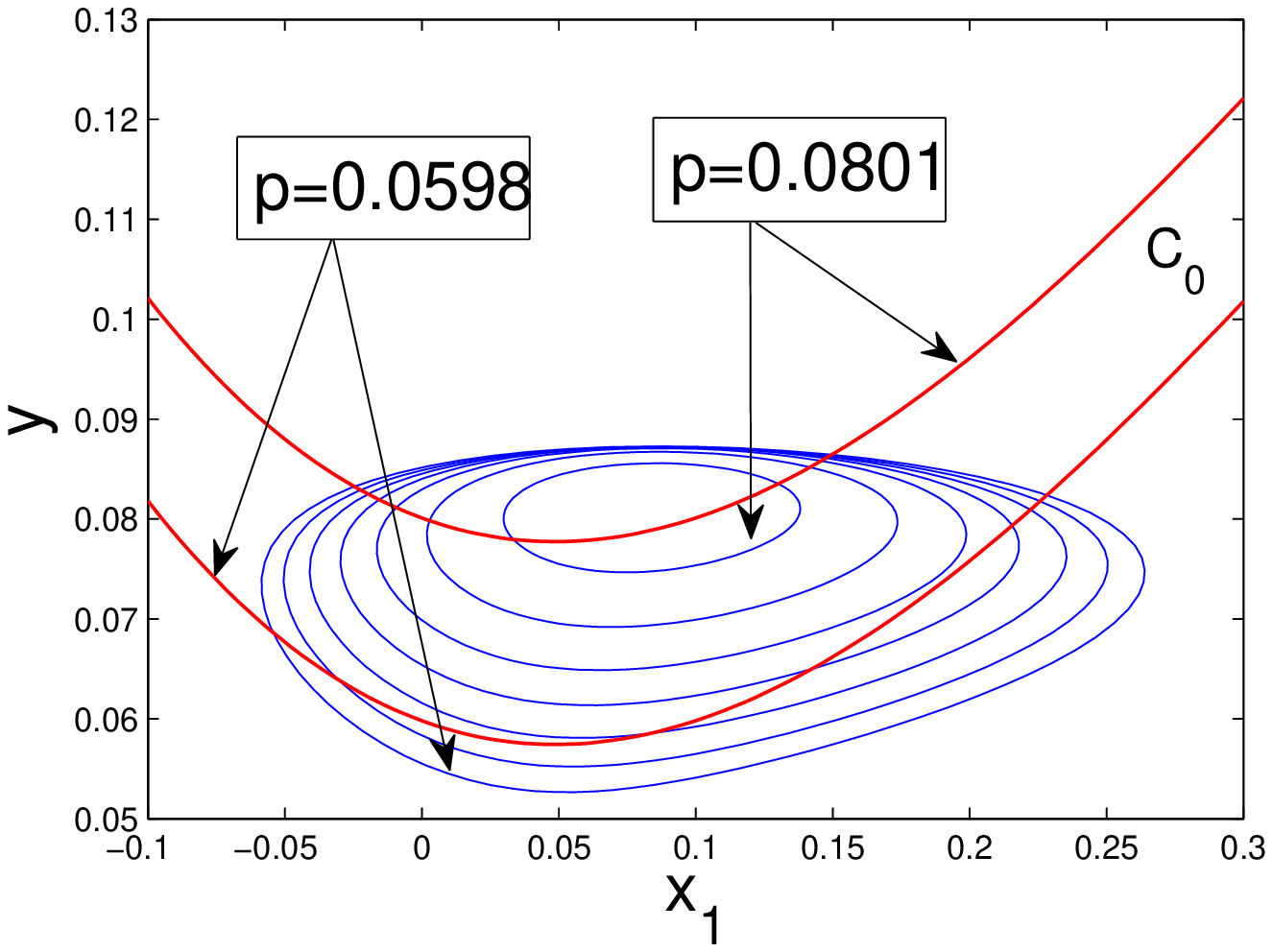}\label{fig:po1}}
 \subfigure[Projection onto $(x_1,x_2)$]{\includegraphics[width=0.46\textwidth]{./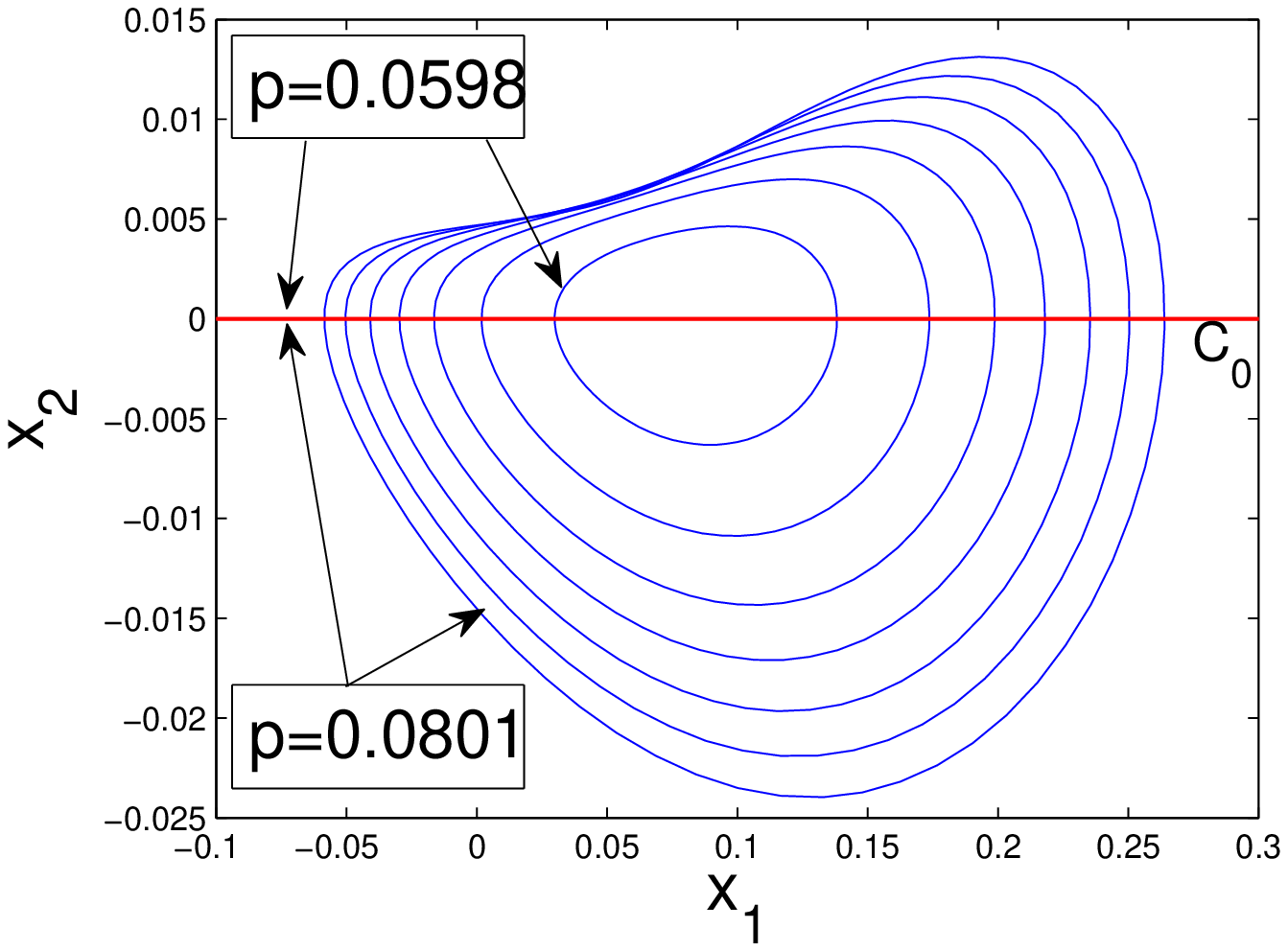}\label{fig:po2}}
\caption{Hopf bifurcation at $p\approx 0.083$, $s=1$ and $\epsilon=0.01$. The critical manifold $C_0$ is shown in red and periodic orbits are shown in blue. Only the first and the last critical manifold for the continuation run are shown; not all periodic orbits obtained during the continuation are displayed.}
\label{fig:porbits}
\end{figure}

The analysis of the slow subsystems  (\ref{eq:red1}) and (\ref{eq:red2}) gives a conjecture about the shape of the periodic orbits in the FitzHugh-Nagumo equation. Consider the parameter regime close to a Hopf bifurcation point. From (\ref{eq:red1}) we expect one part of the small periodic orbits generated in the Hopf bifurcation to lie close to the slow manifolds $C_{l,\epsilon}$ and $C_{m,\epsilon}$. Using the results about equation (\ref{eq:red2}) we anticipate the second part to consist of an excursion in the $x_2$ direction whose length is governed by the wave speed $s$. Figure \ref{fig:porbits} shows a numerical continuation in MatCont \cite{MatCont} of the periodic orbits generated in a Hopf bifurcation and confirms the singular limit analysis for small amplitude orbits. \\

Furthermore we observe from comparison of the $x_1$ and $x_2$ coordinates of the periodic orbits in Figure \ref{fig:po2} that orbits tend to lie close to the plane defined by $x_2=0$. More precisely, the $x_2$ diameter of the periodic orbits is observed to be $O(\epsilon)$ in this case. This indicates that the rescaling of Section \ref{sec:2slow1fast} can help to describe the system close to the U-shaped Hopf curve. Note that it is difficult to check whether this observation of an $O(\epsilon)$-diameter in the $x_2$-coordinate persists for values of $\epsilon<0.01$ since numerical continuation of canard-type periodic orbits is difficult to use for smaller $\epsilon$.\\

\begin{figure}[htbp]
 \subfigure[$GH^\epsilon_{1}$]{\includegraphics[width=0.46\textwidth]{./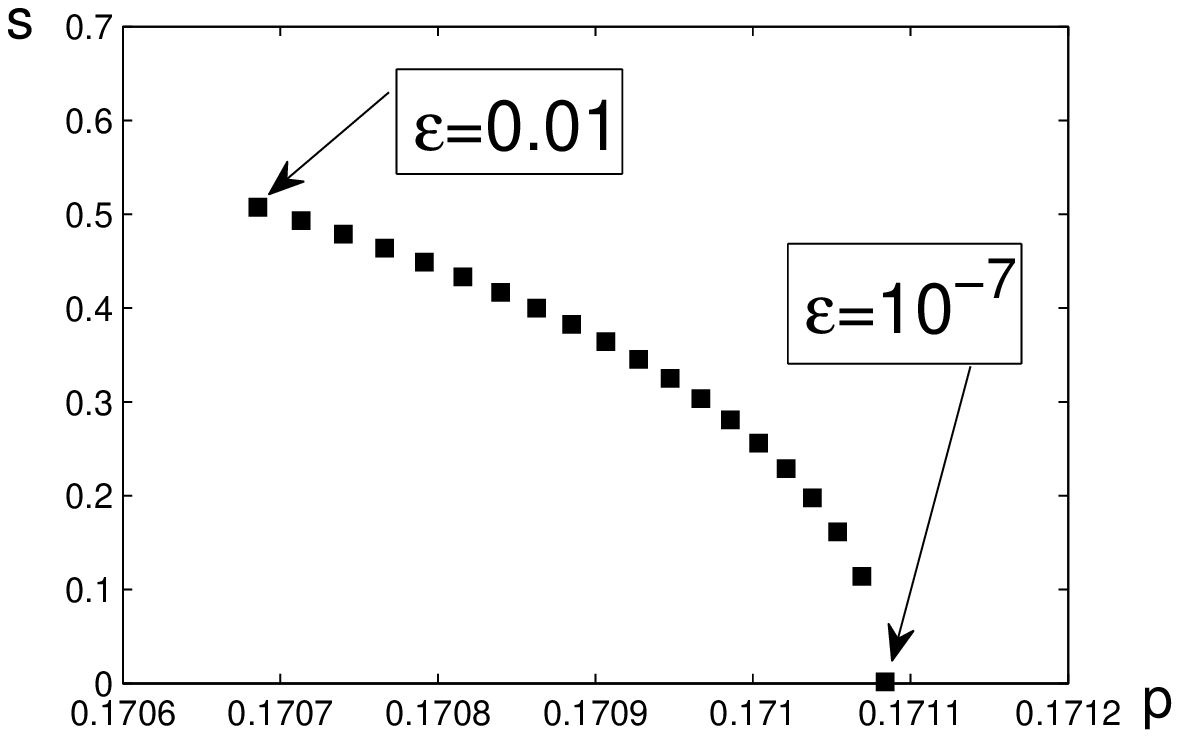}\label{fig:GH2}}
 \subfigure[$GH^\epsilon_{2}$]{\includegraphics[width=0.46\textwidth]{./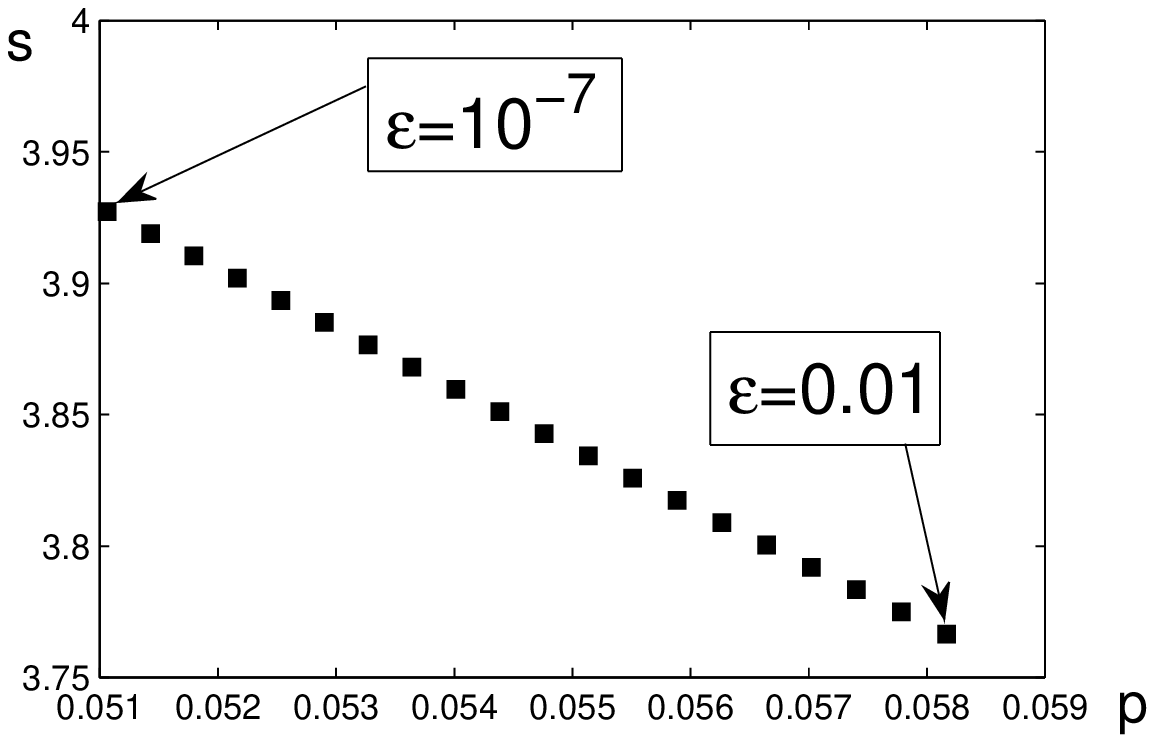}\label{fig:GH1}}
\caption{Tracking of two generalized Hopf points (GH) in $(p,s,\epsilon)$-parameter space. Each point in the figure corresponds to a different value of $\epsilon$. The point $GH^\epsilon_1$ in \ref{fig:GH2} corresponds to the point shown as a square in Figure \ref{fig:cusystem} and the point $GH^\epsilon_2$ in \ref{fig:GH1} is further up on the left branch of the U-curve and is not displayed in Figure \ref{fig:cusystem}.}
\label{fig:GH}
\end{figure}

In contrast to this, it is easily possible to compute the U-shaped Hopf curve using numerical continuation for very small values of $\epsilon$. We have used this possibility to track two generalized Hopf bifurcation points in three parameters $(p,s,\epsilon)$. The U-shaped Hopf curve has been computed by numerical continuation for a mesh of parameter values for $\epsilon$ between $10^{-2}$ and $10^{-7}$ using MatCont \cite{MatCont}. The two generalized Hopf points $GH^\epsilon_{1,2}$ on the left half of the U-curve were detected as codimension two points during each continuation run. The results of this ``three-parameter continuation'' are shown in Figure \ref{fig:GH}.\\ 

The two generalized Hopf points depend on $\epsilon$ and we find that their singular limits in $(p,s)$-parameter space are approximately:
\begin{equation*}
GH^0_1\approx (p=0.171,s=0)\qquad \text{and} \qquad GH^0_2\approx (p=0.051,s= 3.927)
\end{equation*}
We have not found a way to recover these special points from the fast-slow decomposition of the system. This suggests that codimension two bifurcations are generally diffcult to recover from the singular limit of fast-slow systems.\\

Furthermore the Hopf bifurcations for the full system on the left half of the U-curve are subcritical between $GH^\epsilon_1$ and $GH^\epsilon_2$ and supercritical otherwise. For the transformed system \eqref{eq:fhnscale} with two slow and one fast variable we observed that in the singular limit \eqref{eq:red1} for $\epsilon^2\rightarrow 0$ the Hopf bifurcation is supercritical. In the case of $\epsilon=0.01$ the periodic orbits for \eqref{eq:fhn} and \eqref{eq:red1} exist in overlapping regions for the parameter $p$ between the $p$-values of $GH^{0.01}_1$ and $GH^{0.01}_2$. This result indicates that \eqref{eq:fhnscale} can be used to describe periodic orbits that will interact with the homoclinic C-curve. 

\subsection{Homoclinic Orbits}
\label{sec:hetfull}

In the following discussion we refer to ``the'' C-shaped curve of homoclinic bifurcations of system~\eqref{eq:fhn_temp} as the parameters yielding a ``single-pulse'' homoclinic orbit. The literature as described in Section \ref{sec:fhn} shows that close to single-pulse homoclinic orbits we can expect multi-pulse homoclinic orbits that return close to the equilibrium point multiple times. Since the separation of slow manifolds $C_{\cdot,\epsilon}$ is exponentially small, homoclinic orbits of different types will always occur in exponentially thin bundles in parameter space. Values of $\epsilon< 0.005$ are small enough that the parameter region containing all the homoclinic orbits will be indistinguishable numerically from ``the'' C-curve that we locate. \\ 

The history of proofs of the existence of homoclinic orbits in the FitzHugh-Nagumo equation is quite extensive. The main step in their construction is the existence of a ``singular'' homoclinic orbit $\gamma_0$. We consider the case when the fast subsystem has three equilibrium points which we denote by $x_l\in C_l$, $x_m\in C_m$ and $x_r\in C_r$. Recall that $x_l$ coincides with the unique equilibrium $q=(x_1^*,0,x_1^*)$ of the full system for $p<p_-$. A singular homoclinic orbit is always constructed by first following the unstable manifold of $x_l$ in the fast subsystem given by $y=x_1^*$.\\

\begin{figure}[htbp]
\centering
 \includegraphics[width=0.8\textwidth]{./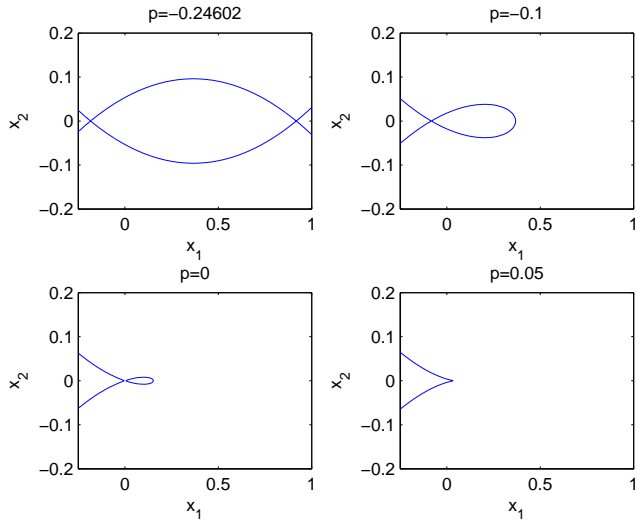}  
 \caption{Homoclinic orbits as level curves of $H(x_1,x_2)$ for equation (\ref{eq:fss2}) with $y=x_1^*$.}
 \label{fig:homs0}
\end{figure}

First assume that $s=0$. In this case the Hamiltonian structure - see Section \ref{sec:hetsub} and equation (\ref{eq:fss2}) - can be used to show the existence of a singular homoclinic orbit. Figure \ref{fig:homs0} shows level curves $H(x_1,x_2)=H(x_1^*,0)$ for various values of $p$. The double heteroclinic connection can be calculated directly using Proposition \ref{prop:doublehet} and solving $x_1^*+\bar{p}^*=p$ for $p$.

\begin{proposition}
There exists a singular double heteroclinic connection in the FitzHugh-Nagumo equation for $s=0$ and $p\approx -0.246016=p^*$.
\end{proposition}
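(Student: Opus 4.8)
The plan is to combine Proposition~\ref{prop:doublehet} with the description of the equilibrium point $q$ of the slow flow. Recall from Proposition~\ref{prop:doublehet} that the fast subsystem \eqref{eq:fss} with $s=0$ lying in the plane $y=y_0$ has a double heteroclinic connection precisely when $p = \bar p^* + y_0$, where $\bar p^* \approx -0.0619259$ and $\bar p = p - y$. To build a \emph{singular homoclinic orbit} of the full system one follows the standard construction recalled just before the statement: the relevant fast subsystem is the one with $y$ frozen at the $y$-coordinate of the equilibrium $q=(x_1^*,0,x_1^*)$, i.e. $y_0 = x_1^*$. So the first step is simply to set $y_0 = x_1^*$ in Proposition~\ref{prop:doublehet}, which says the double heteroclinic connection in the plane $y=x_1^*$ occurs when $p = \bar p^* + x_1^*$.

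Second, I would make explicit that $x_1^*$ itself depends on $p$: it is the unique real root of $c(x_1) = x_1$, equivalently of $x_1(x_1-1)(\tfrac{1}{10}-x_1) + p = x_1$, as established in Section~2. Hence the condition for a singular double heteroclinic connection is the single scalar equation
\begin{equation*}
p = \bar p^* + x_1^*(p),
\end{equation*}
or equivalently, substituting $\bar p = p - x_1^*(p) = \bar p^*$ into \eqref{eq:eqfss}, the pair of equations $c(x_1^*) - x_1^* = 0$ and $x_1^* - c(x_1^*) = \bar p^*$ — which together pin down both $x_1^*$ and $p$. Concretely one solves $x_1^*(x_1^*-1)(\tfrac1{10}-x_1^*) = -\bar p^* = 0.0619259\ldots$ together with $p = x_1^* + \bar p^*$; this is a cubic in $x_1^*$ with a unique real root, and one checks numerically that the resulting value is $p \approx -0.246016$. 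This is exactly the sentence in the excerpt "solving $x_1^* + \bar p^* = p$ for $p$": once $x_1^*$ is known from the fixed-point condition, $p$ is determined.

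Third, I would note existence and uniqueness: the map $p \mapsto x_1^*(p)$ is smooth (it is a branch of a cubic with a single real root for every $p$, as argued in Section~2), and $p \mapsto p - x_1^*(p)$ is continuous; since $c$ is a cubic, solving $c(x_1^*)-x_1^*=0$ simultaneously with $x_1^* - c(x_1^*) = \bar p^*$ has a solution, giving at least one $p$, and the monotonicity of the relevant branch gives uniqueness in the regime of interest. Combining with Proposition~\ref{prop:doublehet}, at this $p$ the frozen fast system in the plane $y = x_1^*$ carries a connection from $x_l$ to $x_r$ and back, i.e. a double heteroclinic loop anchored at the equilibrium $q$ of the full system — a singular homoclinic orbit.

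The main obstacle, such as it is, is bookkeeping rather than mathematics: one must be careful that the $\bar p$ used in Proposition~\ref{prop:doublehet} is $p - y$ and that the correct frozen value of $y$ for the homoclinic construction is $y = x_1^*$ (not, say, $y=0$), and then verify numerically that plugging the unique real root of the cubic $x_1^*(x_1^*-1)(\tfrac1{10}-x_1^*) = -\bar p^*$ into $p = x_1^* + \bar p^*$ indeed yields $p \approx -0.246016$. There is no analytic subtlety beyond what is already contained in Proposition~\ref{prop:doublehet} and the elementary analysis of the cubic $c$; the only quantitative input is the high-accuracy numerical value of $\bar p^*$, which Proposition~\ref{prop:doublehet} already supplies.
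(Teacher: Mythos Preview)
Your approach is exactly the paper's: invoke Proposition~\ref{prop:doublehet} with $y_0=x_1^*$ and solve $p=\bar p^*+x_1^*$ together with the equilibrium condition $c(x_1^*)=x_1^*$; the paper states this in one sentence, and you have simply unpacked it. One bookkeeping slip to fix: the pair you write, ``$c(x_1^*)-x_1^*=0$ and $x_1^*-c(x_1^*)=\bar p^*$'', is self-contradictory (the first forces the left side of the second to vanish); the correct second equation is $p-x_1^*=\bar p^*$, which together with $c(x_1^*)=x_1^*$ gives $f(x_1^*)=-\bar p^*$, exactly the cubic you then solve.
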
   

Techniques developed in \cite{Szmolyan1} show that the singular homoclinic orbits existing for $s=0$ and $p\in(p^*,p_-)$ must persist for perturbations of small positive wave speed and sufficiently small $\epsilon$. These orbits are associated to the lower branch of the C-curve. The expected geometry of the orbits is indicated by their shape in the singular limit shown  in Figure \ref{fig:homs0}. The double heteroclinic connection is the boundary case between the upper and lower half of the C-curve. It remains to analyze the singular limit for the upper half. In this case, a singular homoclinic orbit is again formed by following the unstable manifold of $x_l$ when it coincides with the equilibrium $q=(x_1^*,0,x_1^*)$ but now we check whether it forms a heteroclinic orbit with the stable manifold of $x_r$. Then we follow the slow flow on $C_r$ and return on a heteroclinic connection to $C_l$ for a different y-coordinate with $y>x_1^*$ and $y<c(x_{1,+})=f(x_1)+p$. From there we connect back via the slow flow. Using the numerical method described in Section \ref{sec:hetsub} we first set $y=x_1^*$; note that the location of $q$ depends on the value of the parameter $p$. The task is to check when the system 
\begin{eqnarray}
\label{eq:hetateq}
x_1'&=& x_2\nonumber\\
x_2'&=& \frac{1}{5}\left(f(x_1)+y-p\right)
\end{eqnarray} 
has heteroclinic orbits from $C_l$ to $C_r$ with $y = x_1^*$. The result of this computation is shown in Figure \ref{fig:het_full} as the red curve. We have truncated the result at $p=-0.01$. In fact, the curve in Figure \ref{fig:het_full} can be extended to $p=p^*$. Obviously we should view this curve as an approximation to the upper part of the C-curve.\\
   
\begin{figure}[htbp]
\centering
\psfrag{p}{$p$}
\psfrag{s}{$s$}
 \includegraphics[width=0.5\textwidth]{./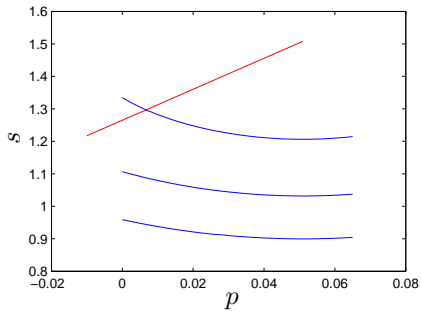} 
 \caption{\label{fig:het_full}Heteroclinic connections for equation \eqref{eq:hetateq} in parameter space. The red curve indicates left-to-right connections for $y=x_1^*$ and the blue curves indicate right-to-left connections for $y=x_1^*+v$ with $v=0.125,0.12,0.115$ (from top to bottom).}
\end{figure}

If the connection from $C_r$ back to $C_l$ occurs with vertical coordinate $x_1^*+v$, it is a trajectory of 
system (\ref{eq:hetateq}) with $y=x_1^*+v$. Figure \ref{fig:het_full} shows values of $(p,s)$ at which these
heteroclinic orbits exist for $v=0.125,0.12,0.115$. An intersection between a red and a blue curve indicates a singular homoclinic orbit. Further computations show that increasing the value of $v$ slowly beyond $0.125$ yields intersections everywhere along the red curve in Figure \ref{fig:het_full}. Thus the values of $v$ on the homoclinic orbits are expected to grow as $s$ increases along the upper branch of the C-curve. Since there cannot be any singular homoclinic orbits for $p\in (p_-,p_+)$ we have to find the intersection of the red curve in Figure \ref{fig:het_full} with the vertical line $p=p_-$. Using the numerical method to detect heteroclinic connections gives:

\begin{proposition}
The singular homoclinic curve for positive wave speed terminates at $p=p_-$ and $s\approx 1.50815=s^*$ on the right and at $p=p^*$ and $s=0$ on the left. 
\end{proposition}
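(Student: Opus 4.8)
The plan is to identify the upper branch of the singular homoclinic C-curve with the red curve $\mathcal{R}$ of Figure~\ref{fig:het_full} and then to pin down its two endpoints separately. Recall that a singular homoclinic orbit on this branch is a concatenation of four pieces: (i) the fast heteroclinic from $q=x_l=(x_1^*,0,x_1^*)$ to $x_r$ in the layer equations~\eqref{eq:hetateq} with $y=x_1^*$; (ii) the slow flow~\eqref{eq:sf} upward along $C_r$; (iii) a fast heteroclinic from $C_r$ back to $C_l$ in a layer $y=x_1^*+v$ with $0<v<c(x_{1,+})-x_1^*$; and (iv) the slow flow downward along $C_l$ back to $q$. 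Since, as observed after Figure~\ref{fig:het_full}, for every $(p,s)\in\mathcal{R}$ one can choose $v>0$ so that the return connection (iii) exists, the set $\mathcal{R}$ is — up to the exponentially thin corrections discussed at the beginning of Section~\ref{sec:hetfull} — precisely the upper branch of the C-curve, so it suffices to determine where $\mathcal{R}$ begins and where it ends. Writing $\bar p=p-x_1^*(p)$ and using the heteroclinic-detection function $h$ of Section~\ref{sec:hetsub}, the zero set $h(\bar p,s)=0$ is a regular $1$-manifold by the implicit function theorem, so $\mathcal{R}$ is an honest curve with well-defined endpoints.

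For the right endpoint, note that pieces (i) and (iv) only make sense when the full-system equilibrium $q$ lies on $C_l$, i.e. when $x_1^*<x_{1,-}$. Since $x_1^*$ is strictly increasing in $p$ (increasing $p$ moves the equilibrium rightward on $C_0$) and reaches the fold value $x_{1,-}$ exactly at $p=p_-$, the curve $\mathcal{R}$ is confined to $\{p<p_-\}$; one also checks that on this range $\bar p=p-x_1^*$ stays inside $(\bar p_l,\bar p_r)$, so the layer $y=x_1^*$ indeed carries the three equilibria $x_l,x_m,x_r$. At $p=p_-$ the left equilibrium coincides with the fold point $x_{1,-}$, so the connection in step (i) runs from a saddle-node point to the (still hyperbolic) saddle $x_r$. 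Applying the shooting/continuation scheme of Section~\ref{sec:hetsub} to~\eqref{eq:hetateq} with $p=p_-$ and $y=x_1^*=x_{1,-}$ — now launching $\gamma^+$ along the unstable eigendirection at $x_l$, which in this limit is the strong unstable direction of the saddle-node — and solving $h\bigl(p_--x_{1,-},\,s\bigr)=0$ for $s$ gives the terminal wave speed $s=s^*\approx 1.50815$. Finally one verifies, using the continuity of $\gamma_l$ and $\gamma_r$ and the monotone growth of $v$ along $\mathcal{R}$, that an admissible value $v\in\bigl(0,\,c(x_{1,+})-x_{1,-}\bigr)$ still closes up the return connection (iii) in this limit.

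For the left endpoint, let $s\to 0^+$. Then~\eqref{eq:hetateq} becomes the Hamiltonian system~\eqref{eq:fss2} with $H(x_1,x_2)=\tfrac12 x_2^2+V(x_1)$, whose level sets are symmetric about $x_2=0$. In this limit the heteroclinic of step (i) can exist only if the two saddles of the layer $y=x_1^*$ lie on the same energy level, $V(x_l)=V(x_r)$, which by Proposition~\ref{prop:doublehet} forces $\bar p=p-x_1^*=\bar p^*\approx-0.0619259$, hence $p=p^*=\bar p^*+x_1^*\approx-0.246016$; at this parameter value the orbit is the symmetric double heteroclinic connection, which (as explained in Section~\ref{sec:hetfull}) is the common boundary between the upper and lower halves of the C-curve. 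Thus $\mathcal{R}$ limits to $(p,s)=(p^*,0)$ as $s\to 0^+$, which is exactly the content of the earlier Proposition on the singular double heteroclinic connection of the full FitzHugh-Nagumo equation. Combining the two endpoint computations proves the statement.

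The delicate point is the right endpoint. Because $x_l$ degenerates to a fold as $p\to p_-$, the heteroclinic in step (i) is no longer a transverse saddle-to-saddle connection, so both the persistence argument (in the spirit of~\cite{Szmolyan1}) and the numerical shooting must be organized around the center-manifold/strong-unstable structure at the saddle-node. The crux is therefore to establish that $\mathcal{R}$ actually reaches the line $p=p_-$ with a finite limiting speed $s^*$ — rather than, say, escaping to $s\to\infty$ — and to compute its value; the remaining ingredients (monotonicity of $x_1^*(p)$ and of $\bar p(p)$, and existence of the return connection) are comparatively routine.
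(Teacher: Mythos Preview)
Your approach is essentially the same as the paper's: identify the upper branch with the red curve of front heteroclinics at $y=x_1^*$, obtain the right endpoint by intersecting this curve with the vertical line $p=p_-$ (where $q$ reaches the fold and can no longer sit on $C_l$) via the numerical shooting function $h$, and obtain the left endpoint from the Hamiltonian double-heteroclinic at $s=0$, $p=p^*$. The paper's own justification is in fact considerably terser than yours---it is presented as the outcome of a numerical computation rather than a structured argument---so your added remarks about regularity of $\mathcal{R}$, the saddle-node degeneration at $p=p_-$, and the persistence of the return jump go somewhat beyond what the paper actually establishes; but the underlying route is the same.
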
  

In $(p,s)$-parameter space define the points:
\begin{equation}
\label{eq:mpts}
A=(p^*,0),\qquad B=(p_-,0),\qquad C=(p_-,s^*)
\end{equation}
In Figure \ref{fig:amazing} we have computed the homoclinic C-curve for values of $\epsilon$ between $10^{-2}$ and $5\cdot 10^{-5}$. Together with the singular limit analysis above, this yields strong numerical evidence for the following conjecture:

\begin{conjecture}
\label{cjt:hom}
The C-shaped homoclinic bifurcation curves converge to the union of the segments $AB$ and $AC$ as $\epsilon \rightarrow 0$.
\end{conjecture}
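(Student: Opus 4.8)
\emph{Proof strategy.} The idea is to treat the two arcs of the limiting curve separately and match them at the corner $A=(p^*,0)$. For fixed small $\epsilon>0$ the homoclinic set is, by the eigenvalue structure of the equilibrium $q_\epsilon$ discussed in Section~\ref{sec:hopf}, the zero set of a split function measuring the signed distance between $W^u(q_\epsilon)$ (one-dimensional) and $W^s(q_\epsilon)$ (two-dimensional) along a fixed cross-section, hence generically a curve in the $(p,s)$-plane. For each arc I would exhibit a singular orbit, verify the transversality that forces it to persist for $\epsilon>0$, and then show that the resulting branch of genuine homoclinic orbits collapses onto the corresponding singular object as $\epsilon\to0$; since the slow manifolds $C_{l,\epsilon},C_{m,\epsilon},C_{r,\epsilon}$ are separated only by exponentially small gaps, the whole thickened multi-pulse bundle shares that limit, which is what the conjecture asserts.

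\emph{The arc $AB$.} For $s=0$ the fast subsystem in the plane $y=x_1^*$ is the Hamiltonian system~\eqref{eq:fss2}, and for $p\in(p^*,p_-)$ the level set $H(x_1,x_2)=H(x_1^*,0)$ contains a planar homoclinic loop to $q$ (Figure~\ref{fig:homs0}); this is already a genuine homoclinic orbit of~\eqref{eq:fhn} at $(\epsilon,s)=(0,0)$. Turning on the two small parameters produces two competing effects on the loop: the divergence of~\eqref{eq:fss} equals $s$, so $s>0$ splits the loop at leading order $c_1(p)\,s$ with $c_1(p)=\tfrac15\int_{-\infty}^{\infty}(x_2^0(t))^2\,dt>0$, while the slow drift $\dot y=\tfrac{\epsilon}{s}(x_1-y)$ contributes a term of order $\epsilon/s$. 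The extended Fenichel/Melnikov machinery of~\cite{Szmolyan1}---already invoked in Section~\ref{sec:hetfull} to obtain persistence of these orbits for small $s>0$ and $\epsilon>0$---then gives the homoclinic branch $s=s_{\mathrm{hom}}(p,\epsilon)$ as the solution of the splitting equation, with $s_{\mathrm{hom}}(p,\epsilon)=O(\epsilon^{1/2})\to0$ uniformly on compact subsets of $(p^*,p_-)$; this is the convergence to the open segment $AB$.

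\emph{The arc $AC$.} Here I would carry out an Exchange Lemma construction in the spirit of the fast-pulse analyses of~\cite{JonesKopellLanger,KSS1997}, adapted to the applied-current, two-parameter setting. The singular orbit $\gamma_0(p)$ concatenates: the fast unstable fibre of $q=x_l$ at $y=x_1^*$, which for $(p,s)$ on the red curve of Figure~\ref{fig:het_full} lands on the stable manifold of the fast equilibrium $x_r\in C_r$; the slow flow~\eqref{eq:sf} along $C_r$ from $y=x_1^*$ up to the jump-off height $y=x_1^*+v$; the fast heteroclinic back to the point of $C_l$ at height $y=x_1^*+v$, which lies to the left of $q$; and the attracting slow flow along $C_l$, which closes the loop asymptotically onto $q$. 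Replacing $C_l,C_r$ by the saddle-type slow manifolds $C_{l,\epsilon},C_{r,\epsilon}$ via Fenichel's theorem, applying the Exchange Lemma at the passage near $C_{r,\epsilon}$, and tracking $W^u(q_\epsilon)$ all the way around, one shows that it meets $W^s(q_\epsilon)$ transversally whenever $(p,s)$ lies within $O(\epsilon)$ of the arc $AC$ (the singular homoclinic curve of the proposition above); the transversality inputs are that each fast heteroclinic is a simple zero of the corresponding $h$-function of Section~\ref{sec:hetsub} and that the slow flow on $C_r$ crosses the jump-off height with nonzero speed. Because all these intersections are $C^1$-close, the genuine (single- and multi-pulse) homoclinic curve converges to the arc $AC$.

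\emph{The corner and the endpoint $C$.} It remains to match the two branches at $A$, where the $AC$-construction degenerates into the $s=0$ double heteroclinic connection of Proposition~\ref{prop:doublehet} (its full-system parameter value being $p^*\approx-0.246$, as computed above), and to cover the two places where the hypotheses of the above arguments fail. Near $A$ the divergence $s$ of the fast field tends to $0$, the two fast heteroclinics lose hyperbolicity, and the Exchange Lemma estimates must be made uniform down to $s=0$; this is delicate but soft. The genuine obstacle is the endpoint $C=(p_-,s^*)$: there $q$ reaches the fold $x_{1,-}$, so in the fast subsystem $x_l$ merges with the source $x_m$ in a saddle-node, Fenichel theory breaks down on the relevant part of $C_0$, and $q_\epsilon$ acquires a pair of complex eigenvalues, i.e.\ a Shilnikov configuration. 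Following $W^u(q_\epsilon)$ past this fold requires a blow-up of the fold point together with a canard analysis---precisely the regime, already touched in Section~\ref{sec:2slow1fast}, in which the C-curve is observed to fold back on itself and in which the continuation methods of~\cite{Sneydetal} themselves break down---and I expect this to be the main difficulty in turning the conjecture into a theorem.
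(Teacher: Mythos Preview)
The paper does not prove this statement: it is explicitly a \emph{conjecture}, supported only by the numerical computations of the C-curve for $\epsilon$ between $10^{-2}$ and $5\cdot 10^{-5}$ (Figure~\ref{fig:amazing}) together with the singular-limit constructions of Section~\ref{sec:hetfull}. Remark~1 following the conjecture states outright that the authors have not considered a rigorous proof and only suggest that generalizations of \cite{KSS1997} might provide a strategy. There is therefore no ``paper's own proof'' against which to compare your attempt.

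That said, your sketch is a reasonable outline of such a program and is broadly aligned with the hints the paper gives: Szmolyan--Melnikov persistence \cite{Szmolyan1} for the segment $AB$, and an Exchange Lemma construction in the style of \cite{JonesKopellLanger,KSS1997} for the arc $AC$. You have also correctly identified the two places where this program meets genuine resistance, namely the corner $A$ and the fold endpoint $C$. Two cautions are worth recording. First, the paper's Remark~1 warns that in the present parameter regime the $1$-homoclinic orbits appear to come in pairs and the surface in $(p,s,\epsilon)$ may differ qualitatively from the setting of \cite{KSS1997}, so a straight transplant of that argument for $AC$ may not be enough. Second, your Melnikov balance on $AB$ (divergence contribution $\sim s$ against slow-drift contribution $\sim \epsilon/s$, giving $s_{\mathrm{hom}}=O(\epsilon^{1/2})$) is heuristically sound on compact subintervals of $(p^*,p_-)$, but making it uniform up to the degenerate endpoints---where the planar loop collapses at $p_-$ and becomes the double heteroclinic at $p^*$---is exactly the kind of endpoint analysis that would have to be done carefully, and the paper (Remark~2) defers the turning/fold analysis near $C$ to separate work.
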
  

\textit{Remark 1:} Figure 4 of Krupa, Sandstede and Szmolyan \cite{KSS1997} shows a ``wedge'' that resembles shown in Figure \ref{fig:amazing}. The system that they study sets $p=0$ and 
varies $a$ with $a\approx 1/2$. For $a=1/2$ and $p=0$, the equilibrium point $q$ is located at the origin and the fast subsystem with $y=0$ has a double heteroclinic connection at $q$ to the saddle equilibrium $(1,0,0)\in C_r$. The techniques developed in \cite{KSS1997} use this double heteroclinic connection as a starting point. Generalizations of the results in \cite{KSS1997} might provide a strategy to prove Conjecture \ref{cjt:hom} rigorously, a possibility that we have not yet considered. However, we think that 1-homoclinic orbits in the regime we study come in pairs and that the surface of 1-homoclinic orbits in $(p,s,\epsilon)$ space differs qualitatively from that described by Krupa, Sandstede and Szmolyan.\\

\begin{figure}[htbp]
\centering
 \includegraphics[width=0.9\textwidth]{./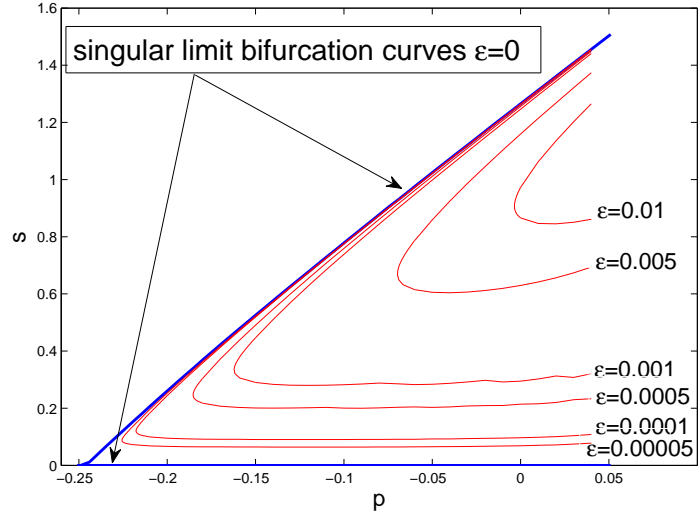}  
 \caption{\label{fig:amazing}Singular limit ($\epsilon=0$) of the C-curve is shown in blue and parts of several C-curves for $\epsilon>0$ have been computed (red).}
\end{figure}

\textit{Remark 2:}  We have investigated the termination or turning mechanism of the C-curve at its upper end. The termination points shown in Figure \ref{fig:cusystem} have been obtained by a different geometric method. It relies on the observation that, in addition to the two fast heteroclinic connections, we have to connect near $C_l$ back to the equilibrium point $q$ to form a homoclinic orbit; the two heteroclinic connections might persist as intersections of suitable invariant manifolds but we also have to investigate how the flow near $C_{l,\epsilon}$ interacts with the stable manifold $W^s(q)$. These results will be reported elsewhere, but we note here that $p_{turn}(\epsilon)\rightarrow p_-$.\\ 

The numerical calculations of the C-curves for $\epsilon\leq 10^{-3}$ are new. Numerical continuation using the boundary value methods implemented in AUTO \cite{Doedel_AUTO2000} or MatCont \cite{MatCont} becomes very difficult for these small values of $\epsilon$ \cite{Sneydetal}. Even computing with values $\epsilon=O(10^{-2})$ using boundary value methods is a numerically challenging problem. The method we have used does not compute the homoclinic orbits themselves while it locates the homoclinic C-curve accurately in parameter space. To motivate our approach consider Figure \ref{fig:splitting} which shows the unstable manifold $W^u(q)$ for different values of $s$ and fixed $p$. We observe that homoclinic orbits can only exist near two different wave speeds $s_1$ and $s_2$ which define the parameters where $W^u(q)\subset W^s(C_{l,\epsilon})$ or $W^u(q)\subset W^s(C_{r,\epsilon})$. Figure \ref{fig:splitting} displays how $W^u(q)$ changes as $s$ varies for the fixed value $p=0.05$. If $s$ differs from the points $s_1$ and $s_2$ that define the lower and upper branches of the C-curve for the given value of $p$, then $|x_1|$ increases rapidly on $W^u(q)$ away from $q$. The changes in sign of $x_1$ on $W^u(q)$ identify values of $s$ with homoclinic orbits. The two splitting points that mark these sign changes are visible in Figure \ref{fig:splitting}. Since trajectories close to the slow manifolds separate exponentially away from them, we are able to assess the sign of $x_1$ unambiguously on trajectories close to the slow manifold and find small intervals $(p,s_1\pm 10^{-15})$ and $(p,s_2\pm 10^{-15})$ that contain the values of $s$ for which there are homoclinic orbits.\\  

\begin{figure}[htbp]
 \subfigure[$\epsilon=0.01$, $p=0.05$, \text{$s\in[0.1,0.9]$}]{\includegraphics[width=0.45\textwidth]{./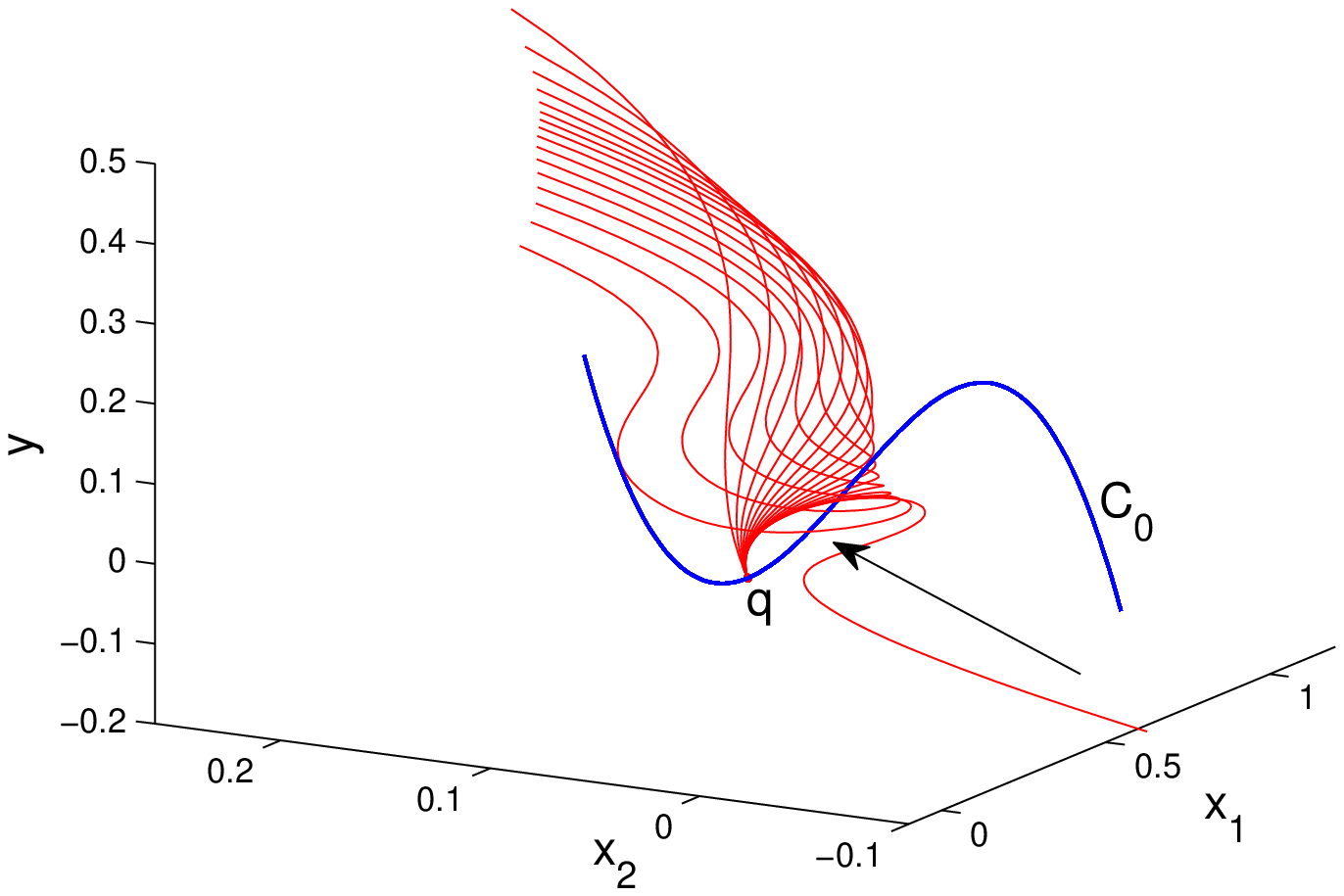} \label{fig:split1}}
 \subfigure[$\epsilon=0.01$, $p=0.05$, \text{$s\in[0.9,1.5]$}]{\includegraphics[width=0.45\textwidth]{./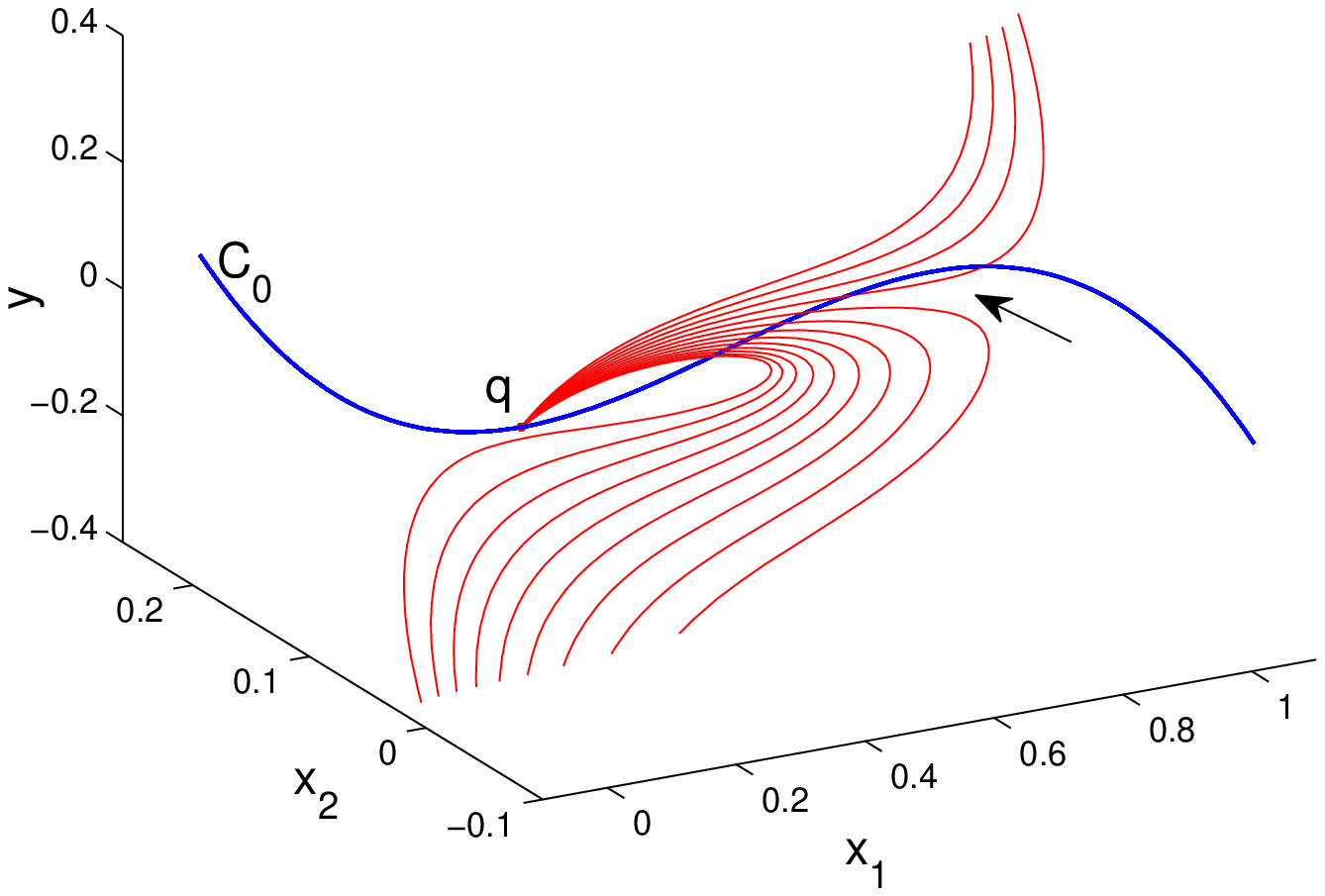} \label{fig:split2}}
\caption{\label{fig:splitting}Strong ``splitting'', marked by an arrow, of the unstable manifold $W^u(q)$ (red) used in the calculation of the homoclinic C-curve for small values of $\epsilon$. The critical manifold $C_0$ is shown in blue. The spacing in $s$ is $0.05$ for both figures.}
\end{figure}

The geometry of the orbits along the upper branch of the C-curve is obtained by approximating it with two fast singular heteroclinic connections and parts of the slow manifolds $C_{r,\epsilon}$ and $C_{l,\epsilon}$; this process has been described several times in the literature when different methods were used to prove the existence of ``fast waves'' (see e.g. \cite{Hastings1,Carpenter,JonesKopellLanger}). 

\section{Conclusions}

\begin{sidewaysfigure}
\centering
\psfrag{x1}{$x_1$}
\psfrag{x2}{$x_2$}
\psfrag{y}{$y$}
\psfrag{p}{$p$}
\psfrag{s}{$s$}
\psfrag{A}{$A$}
\psfrag{B}{$B$}
\psfrag{C}{$C$}
\psfrag{canards}{Canards in equation (\ref{eq:red1}),(\ref{eq:red2})}
\psfrag{Ccurve}{C-curve}
\psfrag{Ucurve}{U-curve}
\psfrag{slowflowbif}{slow flow bifurcation $p=p_-$}
\psfrag{Hopfred1red2}{Hopf bif. $p_{H,-}$ in (\ref{eq:red1}),(\ref{eq:red2})}
 \includegraphics[width=0.55\textwidth]{./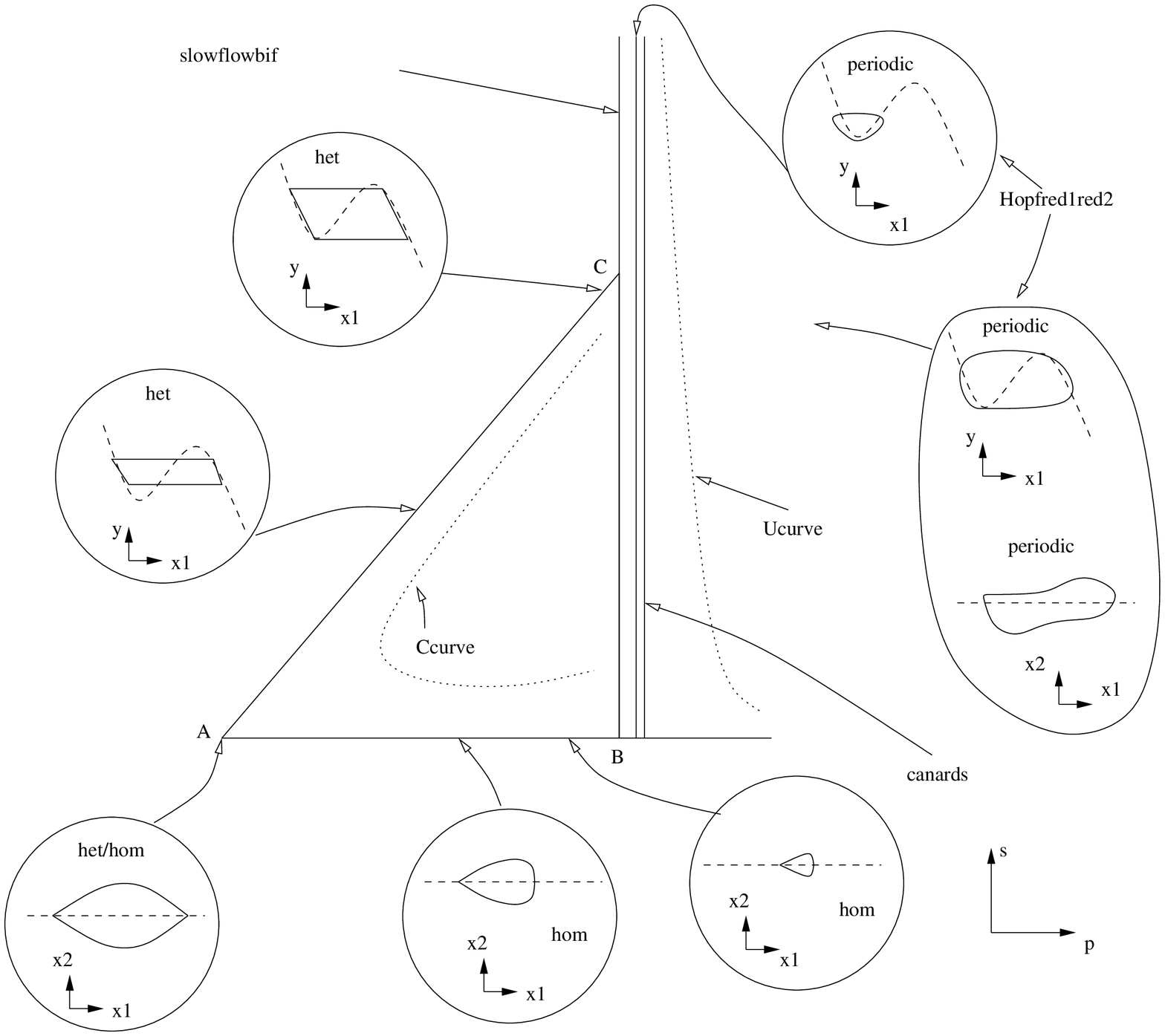}  
 \caption{Sketch of the singular bifurcation diagram for the FitzHugh-Nagumo equation (\ref{eq:fhn}). The points $A,B$ and $C$ are defined in (\ref{eq:mpts}). The part of the diagram obtained from equations (\ref{eq:red1}),(\ref{eq:red2}) corresponds to the case ``$\epsilon^2=0$ and $\epsilon\neq0$ and small''. In this scenario the canards to the right of $p=p_-$ are stable (see Proposition \ref{prop:stablecanards}). The phase portrait in the upper right for equation (\ref{eq:red1}) shows the geometry of a small periodic orbit generated in the Hopf bifurcation of (\ref{eq:red1}). The two phase portraits below it show the geometry of these periodic orbits further away from the Hopf bifurcation for (\ref{eq:red1}),(\ref{eq:red2}). Excursions of the periodic orbits/canards for $p>p_-$ decrease for larger values of $s$. Note also that we have indicated as dotted lines the C-curve and the U-curve for positive $\epsilon$ to allow a qualitative comparison with Figure \ref{fig:cusystem}.}
 \label{fig:singbif}
\end{sidewaysfigure}

Our results are summarized in the singular bifurcation diagram shown in Figure \ref{fig:singbif}.
This figure shows information obtained by a combination of fast-slow decompositions, classical dynamical systems techniques and robust numerical algorithms that work for very small values of $\epsilon$. It recovers and extends to smaller values of $\epsilon$ the CU-structure described in \cite{Sneydetal} for the FitzHugh-Nagumo equation. The U-shaped Hopf curve was computed with an explicit formula, and the homoclinic C-curve was determined by locating transitions between different dynamical behaviors separated by the homoclinic orbits. All the results shown as solid lines in Figure \ref{fig:singbif} have been obtained by considering a singular limit. The lines $AB$ and $AC$ as well as the slow flow bifurcation follow from the singular limit $\epsilon\rightarrow 0$ yielding the fast and slow subsystems of the FitzHugh-Nagumo equation \eqref{eq:fhn}. The analysis of canards and periodic orbits have been obtained from equations (\ref{eq:red1}) and (\ref{eq:red2}) where the singular limit $\epsilon^2\rightarrow 0$ was used (see Section \ref{sec:2slow1fast}). We have also shown the C- and U-curves in Figure \ref{fig:singbif} as dotted lines to orient the reader how the results from Proposition \ref{prop:hopf} and Conjecture \ref{cjt:hom} fit in.\\ 

We also observed that several dynamical phenomena are difficult to recover from the singular limit fast-slow decomposition. In particular, the codimension two generalized Hopf bifurcation does not seem to be observable from the singular limit analysis. Furthermore the homoclinic orbits can be constructed from the singular limits but it cannot be determined directly from the fast and slow subsystems that they are of Shilnikov-type.\\
  
The type of analysis pursued here seems to be very useful for other multiple time-scale problems involving multi-parameter bifurcation problems. In future work, we shall give a geometric analysis of the folding/turning mechanism of the homoclinic C-curve, a feature of this system we have not been able to determine directly from our singular limit analysis. That work relies upon new methods for calculating $C_{l,\epsilon}$ and $C_{r,\epsilon}$ which are invariant slow manifolds of ``saddle-type'' with both stable and unstable manifolds.\\

We end with brief historical remarks. The references cited in this paper discuss mathematical challenges posed by the FitzHugh-Nagumo equation, how these challenges have been analyzed and their relationship to general questions about multiple time-scale systems. Along the line $AB$ in Figure \ref{fig:singbif} we encounter a perturbation problem regarding the persistence of homoclinic orbits that can be solved using Fenichel theory \cite{Szmolyan1}. The point $A$ marks the connection between fast and slow waves in $(p,s)$-parameter space which has been investigated in $(\epsilon,s)$-parameter space in \cite{KSS1997}. We view this codimension 2 connectivity as one of the key features of the FitzHugh-Nagumo system. The perturbation problem for homoclinic orbits close to the line $AC$ was solved using several methods and was put into the context of multiple time-scale systems in \cite{JonesKopell,JonesKopellLanger}, where the Exchange Lemma overcame difficulties in tracking $W^u(q)$ when it starts jumping away from $C_{r,\epsilon}$. This theory provides rigorous foundations that support our numerical computations and their interpretation.\\

{\bf Acknowledgment:} This research was partially supported by the National Science Foundation and Department of Energy.

\bibliographystyle{plain}

\end{document}